\newcommand\Bb{\mathbf{B}}
\newcommand\Pp{\mathcal{P}}
\newcommand\II{\mathcal{I}}
\newcommand\Cc{\mathscr{C}}
\newcommand\Dd{\mathscr{D}}
\newcommand\Ss{\mathcal{S}}
\newcommand\Tt{\mathscr{T}}
\newcommand\XX{\mathfrak{X}}
\DeclareMathOperator{\N}{N}
\DeclareMathOperator{\I}{I}
\DeclareMathOperator{\Oo}{O}
\DeclareMathOperator{\Core}{Core}
\DeclareMathOperator{\Z}{Z}
\DeclareMathOperator{\Syl}{Syl}
\DeclareMathOperator{\Id}{Id}
\DeclareMathOperator{\Aa}{A}
\newtheorem{theorem}{Theorem}
\newtheorem{lemma}[theorem]{Lemma}
\newtheorem{athm}{Theorem}
\theoremstyle{definition}
\newtheorem{proposition}[theorem]{Proposition}
\newtheorem{corollary}[theorem]{Corollary}
\newtheorem*{question}{Question}
\begin{document}

\title{\bf Coset complexes of $p$-subgroups in finite groups}

\author{{\small Huilong Gu, Hangyang Meng\thanks{E-mail: hymeng2009@shu.edu.cn.  }, Xiuyun Guo}\\
  {\small Department of Mathematics, Shanghai University, }\\
     {\small and}\\
    {\small Newtouch Center for Mathematics of Shanghai University,}\\{\small Shanghai 200444, P. R. China}}
\date{}
\maketitle
\begin{abstract}
Let $G$ be a finite group and $p$ be a prime. We denote by $\Cc_p(G)$ the poset of all cosets of $p$-subgroups of $G$. We characterize the homotopy type of the geometric realization $|\Delta\Cc_p(G)|$ for $p$-closed groups $G$, which is motivated by K.S.Brown's Question. We will further demonstrate that $\chi(\Cc_{p}(G)) \equiv |G|_{p’}~~(\text{mod}~~p)$ for any finite group $G$ and any prime $p$.
\\\\
{\bf Mathematics Subject Classification (2010):} 20D15, 20D25.\\
{\bf Keywords:}  Cosets, $p$-subgroups, Euler characteristic, $p$-closed
\end{abstract}

\section{Introduction}

All groups considered in this paper are finite. Let $G$ be a group and $p$
be a prime. Brown~\cite{Brown1975} first introduced $\Ss_p(G)$ the poset of all non-trivial $p$-subgroups of $G$ whose order complex $\Delta\Ss_p(G)$ is called Brown’s complex. Such complex has played an important role in finite group theory due to the fundamental work of Quillen~\cite{Quillen1978}. The homotopy type of the geometric realization $|\Delta\Ss_p(G)|$ (simply say the homotopy type of $\Ss_p(G)$) in Euclidean spaces has been thoroughly and extensively researched in Quillen’s classical paper~\cite{Quillen1978}. He also proposed the well-known Quillen’s conjecture on the contractibility of $\Ss_p(G)$, which states that $\Ss_p(G)$ is contractible if and only if
the largest normal $p$-subgroup $\Oo_p(G)$ of $G$ is nontrivial.

Brown~\cite{Brown2000} studied the poset $\Cc(G)$ of all proper cosets of $G$ and raised many interesting questions to advance this area. One of Brown’s questions on the non-contractibility of $\Cc(G)$ for a group $G$ is settled by Shareshian and Woodroofe~\cite{ShareshianWoodroofe2016}. Furthermore, the poset of proper cosets over some fixed subgroup as an analogue of $\Cc(G)$ is also considered in authors’ recent paper~\cite{WangMengGuo2024}. At the last part of~\cite[Question 8.4]{Brown2000}, Brown also remarked that it would be interesting to investigate the $p$-local analogue of $\Cc(G)$,consisting
of all cosets $Px$ with $P$ a $p$-subgroup of $G$. This is the main motivation of
our paper. Now we denote by
$$\Cc_{p}(G)=\left\{ Hx~|~H\in\Ss_p(G)\cup\{1\},x \in G \right\}$$
the set of all right cosets $Hx$ with $p$-subgroups $H$ (including the identity subgroup) of $G$. Our first result presents the formula of Euler characteristic for the poset $\Cc_{p}(G)$. As a consequence, we characterize the contractibility of $\Cc_{p}(G)$. For convenience, if $\Cc$ is a poset, we shortly write $\chi(\Cc)$ for the Euler characteristic $\chi(\Delta\Cc)$ of the simplicial complex $\Delta\Cc$.

\begin{athm}\label{thm-p-group}
Let $G$ be a group and $p$ be a prime. Then
$$
\chi(\Cc_p(G))=
\begin{cases}
1,&~\text{if $G$ is a $p$-group};\\
-\sum_{H\in\Ss_p(G)\cup\{1\}}\mu(H,G)|G:H|,&~\text{otherwise}.\\
\end{cases}
$$
where $\mu$ is the M\"{o}bius function of the poset $\Ss_p(G)\cup\{1,G\}$. Moreover, the following statements are equivalent.
\begin{itemize}
\item[\emph{(1)}]~$G$ is a $p$-group;

\item[\emph{(2)}]~$\Cc_{p}(G)$ is contractible;

\item[\emph{(3)}]~$\Cc_{p}(G)$ is $(\mathbb{Z})$-acyclic;

\item[\emph{(4)}]~$\chi(\Cc_{p}(G))=1$.
\end{itemize}
\end{athm}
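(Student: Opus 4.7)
The plan is to separate the Euler characteristic computation from the equivalences, and within the equivalences to isolate the one non-trivial direction. The $p$-group case of the formula is immediate: if $G$ is a $p$-group then $G\in\Ss_p(G)\cup\{1\}$ and $Gx=G$ for every $x$, so $G$ is the unique maximum of $\Cc_p(G)$; hence $\Delta\Cc_p(G)$ is a cone with apex $G$, in particular contractible, with $\chi=1$. This simultaneously settles (1)$\Rightarrow$(2) and the first branch of the piecewise formula.

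For the formula when $G$ is not a $p$-group, set $\Pp=\Ss_p(G)\cup\{1\}$ and $\hat\Pp=\Pp\cup\{G\}$ and consider the order-preserving map $Hx\mapsto H$. The key observation is a telescoping count: given a chain $H_0<H_1<\cdots<H_k$ in $\Pp$, the number of chains in $\Cc_p(G)$ projecting to it is
\[
|G:H_k|\cdot|H_k:H_{k-1}|\cdots|H_1:H_0|=|G:H_0|,
\]
because one first chooses $H_kx_k$ and then each $H_ix_i$ as a coset of $H_i$ inside $H_{i+1}x_{i+1}$. Plugging this into the alternating sum defining $\chi(\Cc_p(G))$ and grouping by the minimum $H_0$ yields
\[
\chi(\Cc_p(G))=\sum_{H\in\Pp}|G:H|\bigl(1-\chi(\Pp_{>H})\bigr).
\]
Since $G$ is not a $p$-group, $\Pp_{>H}$ coincides with the open interval $(H,G)$ of $\hat\Pp$, and the standard identification $\mu(H,G)=\tilde\chi(\Pp_{>H})=\chi(\Pp_{>H})-1$ turns $1-\chi(\Pp_{>H})$ into $-\mu(H,G)$, giving the stated Möbius expression.

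For the equivalences, (2)$\Rightarrow$(3) is contractible-implies-$\mathbb{Z}$-acyclic, and (3)$\Rightarrow$(4) follows from $\chi=1+\tilde\chi$; together with (1)$\Rightarrow$(2) handled above these three implications are essentially free. The substantive step, and what I expect to be the main obstacle, is (4)$\Rightarrow$(1). I would argue this by contraposition using a free action of a $p'$-element. If $G$ is not a $p$-group, Cauchy yields $c\in G$ of prime order $q\neq p$; let $\langle c\rangle$ act on $\Cc_p(G)$ by right multiplication. A vertex $Hx$ is fixed iff $c\in x^{-1}Hx$, which is impossible because $x^{-1}Hx$ is a $p$-subgroup. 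Since the action preserves the partial order, a chain is fixed only when each of its vertices is, so the induced simplicial action of $\langle c\rangle$ on $\Delta\Cc_p(G)$ is free. Counting simplices in orbits of size $q$ then gives $\chi(\Cc_p(G))=q\cdot\chi(\Delta\Cc_p(G)/\langle c\rangle)$, so $q\mid\chi(\Cc_p(G))$ and in particular $\chi(\Cc_p(G))\neq 1$, completing the proof.
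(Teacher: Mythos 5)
Your proposal is correct. The computation of the formula is, up to repackaging, the paper's argument: the paper applies Ph.\ Hall's theorem to the augmented coset poset $\Cc_p(G)\cup\{0,G\}$ and identifies $\widehat{\mu}(Hx,G)=\mu(H,G)$ via an isomorphism of intervals, then multiplies by the $|G:H|$ cosets of each $H$; you instead count chains fibrewise over the projection $Hx\mapsto H$ and invoke the same identity $\mu(H,G)=\widetilde{\chi}(\Pp_{>H})$ at the end. Both routes rest on the same Hall identity and yield the same bookkeeping, so I would call them essentially equivalent. Where you genuinely diverge is in (4)$\Rightarrow$(1). The paper reads the conclusion directly off the formula: every index $|G:H|$ with $H$ a $p$-subgroup is divisible by $|G|_{p'}$, so $|G|_{p'}$ divides $\chi(\Cc_p(G))=1$, forcing $|G|_{p'}=1$. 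Your argument via the free right-multiplication action of an element $c$ of prime order $q\neq p$ (fixed-point-free on vertices because $c$ cannot lie in a conjugate of a $p$-subgroup, hence free on simplices since a poset automorphism stabilising a finite chain fixes it pointwise) is correct and is independent of the Möbius formula altogether, which is aesthetically pleasing. What it buys less of: the paper's divisibility observation gives $|G|_{p'}\mid\chi(\Cc_p(G))$ in one stroke, which the authors need later to define the integer $\chi_p(G)=\chi(\Cc_p(G))/|G|_{p'}$; your free-action argument only gives $q\mid\chi(\Cc_p(G))$ for each prime $q\neq p$ dividing $|G|$ (which does recover $|G|_{p'}\mid\chi$ only when $|G|_{p'}$ is squarefree, unless you pass to elements of higher $q$-power order, which need not exist). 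For Theorem A itself, however, either argument suffices.
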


Recall that a group $G$ is called $p$-closed if $G$ has a normal Sylow $p$-
subgroup. Our next result characterizes the homotopy type of $\Cc_p(G)$ for
$p$-closed groups $G$. We denote by $|G|_{p'}$ the largest positive integer coprime to $p$ which divides the order of a group $G$.
\begin{athm}\label{thm-p-closed}
Let $G$ be a group and $p$ be a prime.
Then the following statements are equivalent.
\begin{itemize}
\item[\emph{(1)}]~$G$ is $p$-closed;

\item[\emph{(2)}]~$\Cc_{p}(G)$ is homotopic to a discrete space of $|G|_{p'}$ points;

\item[\emph{(3)}]~$\Cc_{p}(G)$ has exactly $|G|_{p'}$ connected components.
\end{itemize}
In particular, for all cases above, $\chi(\Cc_{p}(G))=|G|_{p'}$.
\end{athm}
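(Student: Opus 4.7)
The plan is to prove the cycle $(2)\Rightarrow(3)\Rightarrow(1)\Rightarrow(2)$, with the crucial step being a general description of the connected components of $\Cc_p(G)$ valid for every finite group $G$; the Euler characteristic claim will then be immediate from $(2)$.

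First I would establish the component count: $\Cc_p(G)$ has exactly $|G:\Oo^{p'}(G)|$ connected components, where $\Oo^{p'}(G)$ denotes the subgroup of $G$ generated by all $p$-elements. Every singleton $\{x\} = 1\cdot x$ lies in $\Cc_p(G)$, and every coset $Hx\in\Cc_p(G)$ dominates each singleton $\{y\}$ with $y\in Hx$ in the poset order, so each connected component of $\Cc_p(G)$ contains a singleton. Two singletons $\{x\},\{y\}$ share an upper bound in $\Cc_p(G)$ if and only if $xy^{-1}\in H$ for some $p$-subgroup $H$ of $G$, if and only if $xy^{-1}$ is a $p$-element; passing to the transitive closure, $\{x\}$ and $\{y\}$ are path-connected in $\Cc_p(G)$ if and only if $yx^{-1}\in\Oo^{p'}(G)$. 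Since $\Oo^{p'}(G)$ is the normal closure of any Sylow $p$-subgroup $P$, one has $P\le\Oo^{p'}(G)$, so $|G:\Oo^{p'}(G)|$ divides $|G|_{p'}$, with equality if and only if $\Oo^{p'}(G)=P$, if and only if $P\triangleleft G$, i.e.\ $G$ is $p$-closed. This yields the equivalence $(3)\Leftrightarrow(1)$.

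For $(1)\Rightarrow(2)$ I would assume $G$ is $p$-closed with normal Sylow $p$-subgroup $P$ and apply Quillen's Theorem~A to the order-preserving map $f:\Cc_p(G)\to G/P$ defined by $f(Hx)=Px$. This is well-defined because every $p$-subgroup of $G$ lies in $P$, and $G/P$ is viewed as a discrete poset of right cosets. For each $Px\in G/P$, the fibre $f^{-1}(Px)=\{Hy\in\Cc_p(G):Hy\subseteq Px\}$ has $Px$ itself as a maximum element when $P\ne1$ and is therefore conically contractible; the degenerate case $P=1$ is trivial, since then $\Cc_p(G)$ is already discrete with $|G|=|G|_{p'}$ vertices. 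Quillen's Theorem~A then yields a homotopy equivalence $|\Delta\Cc_p(G)|\simeq|\Delta(G/P)|$, whose right-hand side is a disjoint union of $|G|_{p'}$ points. The implication $(2)\Rightarrow(3)$ is immediate, and the Euler characteristic formula follows at once from $(2)$. The principal obstacle is the component computation in the second paragraph: one must correctly identify the equivalence relation on singletons generated by shared upper bounds and recognise the resulting index as $|G:\Oo^{p'}(G)|$, after which both $(1)\Leftrightarrow(3)$ and the Quillen-fibre argument for $(1)\Rightarrow(2)$ are routine.
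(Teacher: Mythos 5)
Your proposal is correct, and it differs from the paper's argument in a way worth recording. For the implication $(3)\Rightarrow(1)$ the paper argues by an inequality: every component contains a coset of a fixed $P\in\Syl_p(G)$ (via $Qx=Qz\geq\{z\}\leq Pz$ for $z\in Qx\cap Py$), so there are at most $|G|_{p'}$ components, and if $P$ is not normal a short zigzag through $\{1\}\leq\langle t\rangle\geq\{t\}$ with $t\in P^a\setminus P$ merges two distinct cosets of $P$, forcing strictly fewer. You instead compute the exact number of components for an arbitrary group, namely $|G:\Oo^{p'}(G)|$, by observing that the singletons $\{x\}$ meet every component, that two singletons share an upper bound iff $xy^{-1}$ is a $p$-element, and that the transitive closure of this relation partitions $G$ into the cosets of the subgroup generated by all $p$-elements; since $\Oo^{p'}(G)$ is the normal closure of $P$, the count equals $|G|_{p'}$ exactly when $P\trianglelefteq G$. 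This is a stronger statement than the paper needs and is arguably the cleaner route. For $(1)\Rightarrow(2)$ you invoke Quillen's Theorem A for the projection $Hx\mapsto Px$ onto the discrete poset of cosets of the normal Sylow subgroup, each fibre having $Px$ as a maximum; the paper reaches the same conclusion more elementarily via its closure-operator lemma (Lemma~\ref{lem-homo-equ}), since for a $p$-closed group $\II_p(G)=\{P\}$ and $\Cc(\II_p(G))$ is already the discrete set of cosets of $P$. Both tools are standard and the two arguments are of comparable length; your component formula is the genuinely new ingredient and would be a reasonable remark to add to the paper.
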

It will be naturally asked whether we can characterize $p$-closed groups equivalently by using the Euler characteristic of $\Cc_{p}(G)$ as in Theorem~\ref{thm-p-group}.
But a group $G$ satisfying the property that $\chi(\Cc_{p}(G))=|G|_{p'}$ could not imply that $G$ is $p$-closed. For example, let $G=S_3 \times S_3 \times \cdots \times S_3~(m~\text{times})$, where $m$ is an even integer.
One can obtain that
$\chi(\Cc_2(G))=3^{m}=|G|_{2'}$ and $G$ is not $2$-closed . Up to now, we have not found any counterexamples $G$ for odd primes $p$. For the sake of convenience, throughout this paper, we set
$$\chi_p(G):=\frac{\chi(\Cc_{p}(G))}{|G|_{p'}},$$
and we call $\chi_p(G)$ the $p$-local Euler characteristic of $\Cc_p(G)$, which is an integer by Theorem~\ref{thm-p-group}. It will be interesting to study the following question:

\begin{question}
Describe all groups satisfying $\chi_p(G)=1$.
\end{question}

Recall that, for a prime $p$, a group $G$ is called a $p$-TI-group if
$P\cap P^g=1$ or $P=P^g$ holds for each $g \in G$, where $P$ is a Sylow $p$-subgroup of $G$. Obviously $p$-closed groups are $p$-TI-groups. Such well-known class of groups is described by M. Suzuki~\cite{Suzuki1964} ($p=2$) and C. Y. Ho~\cite{Ho1979} $(p\geq 3)$. We shall give a description for a class of groups $G$ close to $p$-TI-groups, which satisfy $\chi_p(G)=1$ for some prime $p$. 

\begin{athm}\label{thm-p-TI}
Let $G$ be a group such that $\overline{G}=G/\Oo_p(G)$ is isomorphic to the direct product of some $p$-TI-groups for some prime $p$.
Then $\chi_p(G)=1$ if and only if one of the following statement holds:
\begin{itemize}
\item[\emph{(1)}] $G$ is $p$-closed; or
\item[\emph{(2)}] $p=2$ and 
$\overline{G}=A \times X_1 \times \cdots \times X_m$,  where $A$ is a $2'$-subgroup, $X_i \cong S_3$ and $m$ is a positive even integer.
\end{itemize}
\end{athm}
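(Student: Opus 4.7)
The plan is to combine a multiplicativity principle for $\chi_p$ on direct products with an explicit computation of $\chi_p$ on $p$-TI-groups. As a preliminary reduction, I would establish the identity $\chi_p(G)=\chi_p(G/\Oo_p(G))$, allowing us to replace $G$ by $\overline G=T_1\times\cdots\times T_r$ with each $T_i$ a $p$-TI-group satisfying $\Oo_p(T_i)=1$. Next, the key ingredient is the multiplicativity
\[
\chi_p(T_1\times\cdots\times T_r)=\prod_{i=1}^{r}\chi_p(T_i);
\]
a natural route is to analyse the M\"obius function of $\Ss_p\cup\{1,G\}$ via Goursat's description of subgroups of a direct product and then insert the result into Theorem~\ref{thm-p-group}.

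With these reductions in place, I would derive a closed formula for $\chi_p(T)$ when $T$ is $p$-TI. The TI property forces $\Ss_p(T)=\bigsqcup_{P\in\Syl_p(T)}\Ss_p(P)$, and a short computation in the poset $\Ss_p(T)\cup\{1,T\}$ shows that $\mu(P,T)=-1$ for each Sylow $P$, $\mu(H,T)=0$ for every non-Sylow non-trivial $p$-subgroup $H$, and $\mu(1,T)=n_p-1$, where $n_p=|\Syl_p(T)|$. Substituting into Theorem~\ref{thm-p-group} gives
\[
\chi_p(T)=|P|-n_p\bigl(|P|-1\bigr).
\]
One reads off immediately that $\chi_p(T)=1$ iff $n_p=1$ (equivalently $T$ is a $p'$-group under our reduction), and $\chi_p(T)=-1$ reduces to $(n_p-1)(|P|-1)=2$; since $n_p\equiv 1\pmod p$, the only solution is $(n_p,|P|)=(3,2)$ with $p=2$.

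Since each $\chi_p(T_i)$ is an integer and $\prod_i\chi_p(T_i)=1$, every factor lies in $\{\pm 1\}$ with an even number of $-1$'s. Hence either all $T_i$ are $p'$-groups (giving case~(1)) or $p=2$ and each non-$2'$ factor $T_i$ satisfies $|P_i|=2$ and $n_2(T_i)=3$. The hard part will be the structural lemma asserting that every such $T_i$ is isomorphic to $A_i\times S_3$ for some $2'$-subgroup $A_i$. I plan to argue as follows: since $|P_i|=2$ forces $N_{T_i}(P_i)=C_{T_i}(P_i)$, Burnside's transfer gives $T_i=K_i\rtimes P_i$ with $K_i$ an odd-order normal complement; the equality $[K_i:C_{K_i}(P_i)]=n_2=3$ combined with the oddness of $|K_i|$ shows that $H_i:=C_{K_i}(P_i)\triangleleft K_i$ with $K_i/H_i\cong C_3$; finally, a compatibility analysis of the order-$2$ automorphism induced by $P_i$ forces the conjugation action of $K_i/H_i$ on $H_i$ to be inner, which allows one to exhibit an order-$3$ element of $C_{K_i}(H_i)\setminus H_i$, whence $K_i=H_i\times C_3$ and $T_i=H_i\times S_3$.

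Gathering all $2'$-factors together with the $A_i$'s into a single $2'$-subgroup $A$ yields $\overline G=A\times(S_3)^m$, and the relation $(-1)^m=\prod\chi_p(T_i)=1$ forces $m$ to be even. For the converse, case~(1) follows from Theorem~\ref{thm-p-closed}, and in case~(2) multiplicativity together with $\chi_2(S_3)=-1$ gives $\chi_p(G)=\chi_p(\overline G)=(-1)^m=1$. The chief technical obstacles are the multiplicativity of $\chi_p$ on direct products---since $\Cc_p(G_1\times G_2)$ is not literally a product poset, this requires a genuine combinatorial argument via the M\"obius function---and the structural lemma in the $\chi_p=-1$ case, where one must rigorously eliminate non-abelian odd-order extensions $K_i$ via the $P_i$-compatibility constraint.
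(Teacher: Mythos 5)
Your proposal follows the same overall architecture as the paper's proof: reduce modulo $\Oo_p(G)$, prove multiplicativity of $\chi_p$ over direct factors, compute $\chi_p$ of a $p$-TI-group in closed form (your $|P|-n_p(|P|-1)$ is the paper's $s-(s-1)|G|_p$), factor $1$ into $\pm 1$'s with an even number of $-1$'s, and identify the $\chi_p=-1$ factors as $S_3\times(2'$-group$)$. Two sub-arguments differ, in opposite directions. For multiplicativity the paper never touches Goursat or the M\"obius function of $\Ss_p(G_1\times G_2)$: it exhibits an explicit homotopy equivalence $\Cc_p(G_1)\times\Cc_p(G_2)\simeq\Cc_p(G_1\times G_2)$ via $(Xx,Yy)\mapsto (X\times Y)(x,y)$ and $H(x,y)\mapsto (p_1(H)x,p_2(H)y)$ (whose composite is $\geq \Id$), then applies Rota's product formula to the product poset; your Goursat route would have to show that the diagonal $p$-subgroups contribute nothing to $\sum_H\mu(H,G)|G:H|$, which is genuinely delicate and which you leave as an acknowledged obstacle --- I would replace it by the coset-level homotopy equivalence, which is both shorter and already in the spirit of Lemma~\ref{lem-homo-equ}. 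For the structural lemma your route is arguably cleaner than the paper's: the paper runs a minimal-counterexample induction invoking the Feit--Thompson theorem to make a minimal normal subgroup of the odd-order core elementary abelian, whereas your Burnside-complement argument is elementary, and your ``inner action'' claim is correct (if $k$ generates $K$ modulo $H=\C_K(x)$, then $k^x\in k^{-1}H$ induces on $H$ the same automorphism $\alpha$ as $k$, so $\alpha^2$ is inner, and $\alpha^3$ is inner since $k^3\in H$, whence $\alpha$ is inner). However, the final splitting step needs one more ingredient you gloss over: $\C_K(H)$ need not contain an element of order $3$ outside $H$ a priori (its $3$-part could be cyclic of order $9$ meeting $H$ nontrivially), so you should observe that $\C:=\C_K(H)$ is abelian (it is central-by-$C_3$ since $\C\cap H=\Z(H)\leq \Z(\C)$) and take $Y=[\C,x]\cong C_3$ from the coprime decomposition $\C=\C_{\C}(x)\times[\C,x]$; this simultaneously produces the complement and guarantees $y^x=y^{-1}$ exactly, which your bare ``order-$3$ element of $\C_K(H)\setminus H$'' does not (one could get $y^x=y^{-1}h$ with $1\neq h\in\Z(H)$, spoiling $T=H\times\langle y,x\rangle$ --- this is precisely the adjustment the paper makes with its element $z=ys$). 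With those two repairs the proposal is a complete and correct proof.
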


Our last result is an analogue of "Homological Sylow's Theorem", which, due to Brown \cite{Brown1975}, is stated that
$$\chi(\Ss_p(G)) \equiv 1~(\text{mod}~|G|_p)$$
holds for any group $G$ and prime $p$.

\begin{athm}\label{thm-mod-p}
Let $G$ be a non-$p$-closed group and $p$ be a prime.  Then
$$\chi_p(G) \equiv 1~~(\text{mod}~~p^d),$$
where $p^d=\min\{|P:P \cap Q| \mid P,Q \in \Syl_p(G)~\text{with}~P \neq Q\}$.
\end{athm}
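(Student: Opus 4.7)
The plan is to start from the explicit formula for $\chi(\Cc_p(G))$ supplied by Theorem~\ref{thm-p-group} and, dividing by $|G|_{p'}$, write
$$
\chi_p(G) = -\sum_{H \in \Ss_p(G)\cup\{1\}} \mu(H,G)\, p^{a-h},
$$
where $|G|_p = p^a$ and $|H| = p^h$. I would then reduce this sum modulo $p^d$, show that only the Sylow $p$-subgroups contribute, and finish by invoking a classical strengthening of Sylow's theorem due to Wielandt.

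The reduction has two stages. Since $p^{a-h} \equiv 0 \pmod{p^d}$ whenever $a - h \geq d$, only $p$-subgroups $H$ with $|H| > p^{a-d}$ survive. By the definition of $p^d$ the intersection of any two distinct Sylow $p$-subgroups of $G$ has order at most $p^{a-d}$, so every such $H$ lies in a \emph{unique} Sylow $p$-subgroup $P_H$. Consequently, every $p$-subgroup of $G$ containing $H$ is automatically a subgroup of $P_H$, and the interval $[H, G]$ inside the poset $\Ss_p(G) \cup \{1, G\}$ coincides with the subgroup interval $[H, P_H]$ of $P_H$ with $G$ adjoined as a new top. A direct Möbius inversion on $[H, P_H]$ then yields
$$
\mu(H, G) = -\sum_{K \in [H, P_H]} \mu_{[H, P_H]}(H, K) = -\delta_{H, P_H},
$$
so that $\mu(H, G) = -1$ when $H$ is itself a Sylow and vanishes for every non-Sylow $H$ in the surviving range. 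Substituting back gives
$$
\chi_p(G) \equiv -\sum_{P \in \Syl_p(G)} (-1) = n_p \pmod{p^d},
$$
where $n_p = |\Syl_p(G)|$.

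To conclude I would invoke Wielandt's refinement of Sylow's theorem, which asserts precisely that for $p^d = \min\{|P : P \cap Q| : P \neq Q \in \Syl_p(G)\}$ one has $n_p \equiv 1 \pmod{p^d}$; this is a classical result that can be quoted as a black box. Combining it with the congruence above yields $\chi_p(G) \equiv 1 \pmod{p^d}$, as required. The main obstacle, as I see it, is carrying out the Möbius computation in the middle step cleanly: one must verify that the ``unique Sylow'' condition genuinely collapses the upper interval above $H$ to the subgroup lattice of a single Sylow capped by $G$, so that the standard vanishing of Möbius sums takes over. Once that geometric collapse is secured, the identification $\chi_p(G) \equiv n_p \pmod{p^d}$ is immediate, and Wielandt's theorem closes the gap arithmetically.
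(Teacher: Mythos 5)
Your proposal is correct, and the overall strategy (reduce the M\"obius sum modulo $p^d$, observe that only the Sylow subgroups survive, and finish with the Wielandt/strengthened-Sylow congruence $n_p\equiv 1 \pmod{p^d}$, which is the paper's Lemma~\ref{lem-Sylow-third}) is the same as the paper's. The genuine difference is the poset you sum over. The paper first passes to the subposet $\II_p(G)$ of intersections of Sylow $p$-subgroups via Corollary~\ref{cor-char-intersection}; there the non-Sylow elements are by construction intersections of at least two Sylows, so their terms are killed modulo $p^d$ purely by the divisibility of the index $|G|_p/|H|$, and the Sylows are coatoms of $\II_p(G)\cup\{G\}$, so $\mu(H,G)=-1$ is immediate. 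You instead work with the full formula of Theorem~\ref{thm-p-group} over all $p$-subgroups, where divisibility alone only eliminates the small subgroups; for the remaining non-Sylow $H$ with $|H|>p^{a-d}$ you need the additional observation that $H$ lies in a unique Sylow $P_H$, so that the interval $[H,G]$ is the subgroup interval $[H,P_H]$ with $G$ adjoined, whence $\mu(H,G)=-\delta_{H,P_H}$ by the defining recursion of the M\"obius function. That vanishing argument is valid (the uniqueness of $P_H$ follows exactly from $|P\cap Q|\le p^{a-d}$ for distinct Sylows, and the restriction of $\mu$ to $[H,P_H]$ agrees with the M\"obius function of that interval, as in Lemma~\ref{lem-mobius-subposet}), so your proof goes through; it trades the paper's preliminary homotopy reduction to $\II_p(G)$ for an extra, but standard, M\"obius computation, and has the mild advantage of needing only the statement of Theorem~\ref{thm-p-group} rather than the intersection-poset machinery.
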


\begin{corollary}
Let $G$ be a group and $p$ be a prime.  Then
$$\chi_p(G) \equiv 1~~(\text{mod}~~p).$$
\end{corollary}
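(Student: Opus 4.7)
The plan is to dispatch the corollary by case analysis on whether $G$ is $p$-closed, using Theorem~\ref{thm-p-closed} and Theorem~\ref{thm-mod-p} respectively. Everything reduces to arithmetic about $p$-group indices, so no new ideas are needed beyond the two results already stated.

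First I would handle the trivial case. If $G$ is $p$-closed, Theorem~\ref{thm-p-closed} gives $\chi(\Cc_p(G)) = |G|_{p'}$, so by definition $\chi_p(G) = 1$, and the congruence $\chi_p(G) \equiv 1 \pmod{p}$ is immediate.

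Next I would treat the non-$p$-closed case via Theorem~\ref{thm-mod-p}. The theorem already gives $\chi_p(G) \equiv 1 \pmod{p^d}$, where
\[
p^d = \min\bigl\{\,|P:P\cap Q| \,\bigm|\, P,Q \in \Syl_p(G),\ P \neq Q\,\bigr\}.
\]
So it suffices to verify $d \geq 1$. Since $G$ is not $p$-closed it has at least two distinct Sylow $p$-subgroups $P \neq Q$; as $P \cap Q$ is a proper subgroup of $P$ (if $P \cap Q = P$ then $P \subseteq Q$, forcing $P = Q$ by equal orders), the index $|P:P \cap Q|$ is a divisor of $|P|$ strictly greater than $1$, hence a positive power of $p$. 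Therefore $p^d \geq p$, which gives $\chi_p(G) \equiv 1 \pmod{p}$.

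There is essentially no obstacle here: the substantive content sits entirely in Theorem~\ref{thm-mod-p}, and the corollary amounts to the elementary observation that distinct Sylow $p$-subgroups always meet in a proper subgroup of $p$-power index. The only thing worth flagging for the reader is the need to separate out the $p$-closed case, where Theorem~\ref{thm-mod-p} does not apply by hypothesis but the conclusion holds for trivial reasons via Theorem~\ref{thm-p-closed}.
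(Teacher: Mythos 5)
Your proof is correct and is exactly the intended derivation: the paper states the corollary without proof as an immediate consequence of Theorem~\ref{thm-mod-p} (with $p^d \geq p$ since distinct Sylow $p$-subgroups meet in a proper subgroup) together with the $p$-closed case handled by Theorem~\ref{thm-p-closed}. Nothing further is needed.
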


\section{Homotopy of $\Cc_p(G)$}

In this section, we review some of the standard facts of order complexes of posets. See the first two parts of Smith's book\cite{Smith2011} for more details.

Let $(\Pp,\leq)$ be a poset (short for partially ordered set). The order complex $\Delta \Pp$ of $\Pp$ consists of all
finite chains in $\Pp$, including the empty set. The chain of length $i$ in $\Pp$ consisting of $i+1$ elements is called an $i$-simplex in $\Delta \Pp$. In particular, the empty set $\varnothing$ is the $(-1)$-simplex. It is well-known that such (abstract) simplicial complex $\Delta \Pp$ can be embedded into a Euclidean topological space as a subspace denoted by $|\Delta \Pp|$, so-called the geometric realization of $\Delta \Pp$. We abbreviate $|\Delta \Pp|$ as $|\Pp|$ here.

A poset map $f : (\mathcal{P},\leq_\mathcal{P})\rightarrow(\mathcal{Q},\leq_\mathcal{Q})$ of posets~$\Pp, \mathcal{Q}$, is a set-theoretical map such that  $x \leq_\mathcal{P} y$ implies $f(x) \leq_\mathcal{Q} f(y).$ It is routine to check that a poset map $f:\mathcal{P}\rightarrow\mathcal{Q}$ naturally induces a simplicial map $\Delta f$ between order complexes $\Delta \Pp$ and $\Delta \mathcal{Q}$, and $f$ also induces a continuous map $|f| : |\mathcal{P}|\rightarrow|\mathcal{Q}|.$  The following lemma gives a sufficient condition for the homotopy between two continuous maps induced by poset maps.

\begin{lemma}\emph{\cite[Lemma~3.1.7]{Smith2011}}\label{lem-homotopic}
Suppose that $f, g: P \to Q$ satisfy $f,g$ are poset maps with $f \leq g$, that is, $f(x)\leq g(x)$ for each $x \in P$. Then $f$ and $g$ are homotopic, that is, $|f|, |g|: |\mathcal{P}| \to |\mathcal{Q}|$ are homotopic.
\end{lemma}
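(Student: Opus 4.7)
The plan is to realise the desired homotopy as the geometric realisation of a single auxiliary poset map, thereby avoiding any ad hoc simplex-level construction. Let $\mathbf{2}$ denote the two-element chain $\{0<1\}$, whose geometric realisation $|\mathbf{2}|$ is the unit interval $[0,1]$. Consider the product poset $P\times\mathbf{2}$ with componentwise order, and define $h:P\times\mathbf{2}\to Q$ by $h(x,0)=f(x)$ and $h(x,1)=g(x)$. First I would verify that $h$ is a poset map: the only non-trivial instance of monotonicity is a relation of the form $(x,0)\leq (y,1)$ with $x\leq y$, where $h(x,0)=f(x)\leq g(x)\leq g(y)=h(y,1)$ uses exactly the hypothesis $f\leq g$ together with monotonicity of $g$. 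The remaining cases reduce to monotonicity of $f$ or of $g$ separately.

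Next I would invoke the classical identification $|P\times\mathbf{2}|\cong|P|\times[0,1]$, under which the sub-posets $P\times\{0\}$ and $P\times\{1\}$ correspond to the faces $|P|\times\{0\}$ and $|P|\times\{1\}$ respectively. Composing this homeomorphism with $|h|$ then yields a continuous map $H:|P|\times[0,1]\to|Q|$ satisfying $H(\cdot,0)=|f|$ and $H(\cdot,1)=|g|$, which is by definition a homotopy from $|f|$ to $|g|$.

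The main obstacle is furnishing the homeomorphism $|P\times\mathbf{2}|\cong|P|\times[0,1]$ cleanly. Concretely, every chain in $P\times\mathbf{2}$ has the form $(x_0,0)<\cdots<(x_i,0)<(x_i,1)<\cdots<(x_n,1)$, possibly omitting one of the two halves, and these chains are precisely the simplices of the canonical prism triangulation of $|P|\times[0,1]$ over the simplex spanned by the underlying chain $x_0<\cdots<x_n$ in $P$. I would handle this by writing the homeomorphism affinely simplex by simplex and invoking the gluing property of geometric realisation. A more hands-on alternative that bypasses products of posets entirely is to define $H$ directly by linear interpolation on each simplex of $|P|$: the inequalities $f(x_i)\leq g(x_j)$ for $i\leq j$, which follow from $f\leq g$ combined with monotonicity of $g$, ensure that the interpolated segment lies in a genuine simplex of $|Q|$ spanned by the vertices $f(x_0),\ldots,f(x_n),g(x_0),\ldots,g(x_n)$, and continuity across shared faces is then automatic.
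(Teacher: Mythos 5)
Your main argument --- the auxiliary poset map $h:P\times\mathbf{2}\to Q$ together with the prism triangulation identifying $|P\times\mathbf{2}|$ with $|P|\times[0,1]$ --- is correct and is essentially the standard proof of this lemma (the paper offers no proof of its own, citing Smith, whose Lemma~3.1.7 follows Quillen's original argument along exactly these lines). You have correctly isolated where the hypothesis $f\leq g$ enters, namely in checking monotonicity of $h$ across the step $(x,0)\leq(y,1)$, and for finite posets (the only case the paper needs) the homeomorphism $|P\times\mathbf{2}|\cong|P|\times[0,1]$ via the maximal chains $(x_0,0)<\cdots<(x_i,0)<(x_i,1)<\cdots<(x_n,1)$ is routine.

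Your ``more hands-on alternative,'' however, is not correct as stated. The set $\{f(x_0),\ldots,f(x_n),g(x_0),\ldots,g(x_n)\}$ need not be a chain in $Q$, because $f(x_j)$ and $g(x_i)$ can be incomparable when $i<j$: take $P=\{a<b\}$ and let $Q$ be the diamond with $u<v$, $u<w$, $v<z$, $w<z$ and $v,w$ incomparable, with $f(a)=u$, $f(b)=v$, $g(a)=w$, $g(b)=z$. Then $f,g$ are poset maps with $f\leq g$, yet $\{u,v,w,z\}$ spans no simplex of $\Delta Q$, so the straight-line interpolation on the edge $\{a<b\}$ leaves $|Q|$. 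This is precisely why the prism subdivision is needed: each prism simplex maps to the genuine chain $f(x_0)\leq\cdots\leq f(x_i)\leq g(x_i)\leq\cdots\leq g(x_n)$, and only the subdivided homotopy is piecewise linear into $|Q|$. Keep the first argument and discard the second.
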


Recall that topological spaces $X$ and $Y$ are homotopy equivalent if there are continuous maps $f: X \to Y$ and $g: Y \to X$ whose compositions satisfy the homotopies $g \circ f \simeq \text{Id}_X$ and $f \circ g \simeq \text{Id}_Y$. For posets $\Pp, \mathcal{Q}$, $\Pp$ and $\mathcal{Q}$ are said to be homotopy-equivalent if $|\Pp|$ and $|\mathcal{Q}|$ are homotopy-equivalent, simply denoted by $\Pp\simeq \mathcal{Q}$.

For a prime $p$ and a group $G$, we denote
$$\II_p(G)=\left\{ P_1\cap P_2 \cdots \cap P_s  \mid P_i \in \Syl_p(G)~\text{for all}~1\leq i\leq s,~\text{and}~s\geq 1\right\}$$
by the set of all intersections of some Sylow $p$-subgroups of $G$.
\begin{lemma}\label{lem-homo-equ}
Let $G$ be a group and $p$ be a prime. Write
$$\Cc(\II_p(G))=\{Hx \mid H \in \II_p(G), x \in G\}.$$
Then $\Cc_p(G)$ is homotopy-equivalent to $\Cc(\II_p(G))$.
\end{lemma}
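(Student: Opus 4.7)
The plan is to exhibit a deformation retraction of $\Cc_p(G)$ onto its sub-poset $\Cc(\II_p(G))$, in the style of a Quillen-type closure operator, and then to invoke Lemma~\ref{lem-homotopic}. The basic ingredient is the ``Sylow closure'' of a $p$-subgroup $H\leq G$, namely
$$\overline{H}\;:=\;\bigcap_{\substack{P\in\Syl_p(G)\\ H\leq P}}P,$$
which, by Sylow's theorem, is well defined and belongs to $\II_p(G)$, contains $H$, and satisfies $\overline{H}=H$ whenever $H\in\II_p(G)$. I will also use the monotonicity $H\leq K\Rightarrow \overline{H}\leq\overline{K}$, which is immediate since any Sylow $p$-subgroup containing $K$ also contains $H$.

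I would promote this to the level of cosets by
$$f:\Cc_p(G)\longrightarrow\Cc(\II_p(G)),\qquad f(Hx)\;:=\;\overline{H}\,x,$$
and verify the following in order: (i) $f$ is well defined, because a coset $Hx$ determines the subgroup $H$ uniquely and $\overline{H}$ depends only on $H$; (ii) $f$ is a poset map, for if $Hx\subseteq Ky$ then $H\leq K$ and $x\in Ky$, whence $\overline{H}\leq\overline{K}$ and $\overline{H}x\subseteq\overline{K}x=\overline{K}y$; and (iii) if $\iota:\Cc(\II_p(G))\hookrightarrow\Cc_p(G)$ denotes the inclusion (itself a poset map), then $f\circ\iota=\Id$ on $\Cc(\II_p(G))$ by the fixed-point property of the closure, while $(\iota\circ f)(Hx)=\overline{H}x\supseteq Hx$, so $\Id_{\Cc_p(G)}\leq \iota\circ f$ pointwise in the inclusion order on cosets.

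Applying Lemma~\ref{lem-homotopic} to the two poset maps $\Id$ and $\iota\circ f$ yields $|\iota\circ f|\simeq\Id_{|\Cc_p(G)|}$, and the other composite already equals the identity, so $|f|$ is the desired homotopy equivalence. I do not anticipate any real obstacle: the only step asking for a moment's care is the trivial subgroup $H=1$, where $\overline{1}=\Oo_p(G)\in\II_p(G)$, but this causes no problem since $\Oo_p(G)\cdot x$ is a legitimate coset in $\Cc(\II_p(G))$.
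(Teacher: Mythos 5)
Your proposal is correct and is essentially the paper's own argument: the same Sylow-closure map $Hx\mapsto\bigl(\bigcap_{H\leq P\in\Syl_p(G)}P\bigr)x$, the same verification that it is a poset map fixing $\Cc(\II_p(G))$ with $\iota\circ f\geq\Id$, and the same appeal to Lemma~\ref{lem-homotopic}. No issues.
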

\begin{proof}
For each $Hx \in \Cc_p(G)$, we define a map $f$ from $\Cc_p(G)$ to $\Cc(\II_p(G))$ by
$$f(Hx):=\bigcap_{H\subseteq P \in \Syl_p(G)}Px=\left(\bigcap_{H\subseteq P \in \Syl_p(G)}P\right)x.$$
Clearly $f(Hx) \in \Cc(\II_p(G))$, so it is well-defined. It is easy to see that $f$ is a poset map, moreover, $f(Hx)\geq Hx$ for each $Hx \in \Cc_p(G)$ and $f(Hx)=Hx$ for each $Hx \in \Cc(\II_p(G))$.
Let $i$ be the inclusion map from $\Cc(\II_p(G))$ to $\Cc_p(G)$. We can obtain that $f\circ i=\Id_{\Cc(\II_p(G))}$ and $i\circ f\geq \Id_{\Cc_p(G)}$, which implies that $\Cc_p(G)$ is homotopy-equivalent to $\Cc(\II_p(G))$.
\end{proof}

\begin{lemma}\label{lem-mod-normal-p-subgroup}
Let $G$ be a group and $p$ be a prime. Suppose that $N$ is a normal $p$-subgroup of $G$. Then $\Cc_p(G)$ is homotopy-equivalent to  $\Cc_p(G/N).$
\end{lemma}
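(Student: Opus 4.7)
The plan is to factor the desired homotopy equivalence through a convenient intermediate subposet of $\Cc_p(G)$. Specifically, I would introduce
$$\Cc_p^N(G) := \{Hx \in \Cc_p(G) \mid N \leq H\},$$
and first observe that the assignment $Hx \mapsto (H/N)(xN)$ defines a poset isomorphism $\Cc_p^N(G) \cong \Cc_p(G/N)$. This uses the standard correspondence between subgroups of $G/N$ and subgroups of $G$ containing $N$, together with the fact that, because $N$ is itself a $p$-subgroup, this correspondence matches $p$-subgroups on both sides.

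The key step is then to prove $\Cc_p(G) \simeq \Cc_p^N(G)$, following essentially the template of Lemma~\ref{lem-homo-equ}. I would define a closure map $f\colon \Cc_p(G) \to \Cc_p^N(G)$ by $f(Hx) = (HN)x$; this is well-defined because $H$ being a $p$-subgroup and $N$ a normal $p$-subgroup force $HN$ to be a $p$-subgroup containing $N$. Checking that $f$ is a poset map (an inclusion $Hx \subseteq Ky$ passes to an inclusion after multiplying by $N$) and that $f(Hx) \supseteq Hx$ is routine.

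With $i$ denoting the inclusion $\Cc_p^N(G) \hookrightarrow \Cc_p(G)$, I would then verify the two identities $f \circ i = \Id_{\Cc_p^N(G)}$ (immediate, since $HN = H$ when $N \leq H$) and $i \circ f \geq \Id_{\Cc_p(G)}$, and apply Lemma~\ref{lem-homotopic} to conclude that $i$ is a homotopy equivalence. Composing with the poset isomorphism $\Cc_p^N(G) \cong \Cc_p(G/N)$ finishes the proof. There is no genuine obstacle: the argument runs exactly parallel to Lemma~\ref{lem-homo-equ}, with the $N$-closure $HN$ playing the role that the Sylow-closure of $H$ played there.
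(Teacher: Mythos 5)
Your proposal is correct, and it reaches the conclusion by a slightly different factorization than the paper. The paper routes everything through $\Cc(\II_p(G))$: since $N$ lies in every Sylow $p$-subgroup, it lies in every member of $\II_p(G)$, so the quotient map $C\mapsto \overline{C}$ is already a poset \emph{isomorphism} $\Cc(\II_p(G))\to\Cc(\II_p(G/N))$, and the lemma follows by invoking Lemma~\ref{lem-homo-equ} on both $G$ and $G/N$. You instead introduce the subposet $\Cc_p^N(G)$ of cosets $Hx$ with $N\leq H$, identify it with $\Cc_p(G/N)$ via the correspondence theorem (valid since $N\leq H$ with $N$, $H$ both $p$-groups makes $H/N$ a $p$-subgroup, and conversely), and build a fresh closure map $Hx\mapsto (HN)x$ — well-defined because $HN$ is a $p$-group when $H$ is a $p$-group and $N$ a normal $p$-subgroup — to which Lemma~\ref{lem-homotopic} applies exactly as in the proof of Lemma~\ref{lem-homo-equ}. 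Both arguments use the same underlying technique (a monotone idempotent closure operator fixing the subposet pointwise); the paper's version is shorter because it reuses Lemma~\ref{lem-homo-equ} as a black box, while yours is self-contained and does not need the observation that $N$ sits inside every element of $\II_p(G)$. All the verifications you label as routine (that $f$ is a poset map, that $f\circ i=\Id$, that $i\circ f\geq \Id$, and that the quotient correspondence is an order isomorphism in both directions) do go through without difficulty.
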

\begin{proof}
For any subset $X\subseteq G$, we write $\overline{X}=XN/N=\{xN \mid x \in X\}$ for the image of $X$ in the quotient group $G/N$.  Since $N$ is a normal $p$-subgroup of $G$, $N$ is contained in the intersection of any Sylow $p$-subgroups of $G$. Hence $\II_p(G/N)=\{X/N \mid X \in \II_p(G)\}$.
Notice that the map $f: \Cc(\II_p(G))\rightarrow \Cc(\II_p(G/N))$ with $f(C)=\overline{C}$ for $C \in \Cc_p(\II_p(G))$ is a bijective poset map. Hence  $\Cc(\II_p(G))\simeq \Cc(\II_p(G/N))$.
It follows from Lemma~\ref{lem-homo-equ} that $\Cc_p(G)\simeq \Cc(\II_p(G))$ and  $\Cc_p(G/N)\simeq \Cc(\II_p(G/N))$. Hence $\Cc_p(G) \simeq \Cc_p(G/N)$, as desired.
\end{proof}

Let $\Cc$ be a finite poset and denote by
$$\I(\Cc)=\{(x,y)\in \Cc \times\Cc \mid x \leq y\}.$$
the subset of $\Cc \times\Cc $ consisting of all pairs $x,y$ in $\Cc$ with $x \leq y$.
Recall that the M\"{o}bius function $\mu$ of $\Cc$ is a function from $\I(\Cc)$ to $\mathbb{Z}$ such that for each pair $(x,y) \in \I(\Cc)$,
$$\sum_{x\leq z\leq y}\mu(x,z)=\delta(x,y)=\sum_{x\leq z\leq y}\mu(z,y),$$
where $\delta(x,y)=1$ if $x=y$; and $\delta(x,y)=0$ if $x<y$.
As a consequence of the definition of the M\"{o}bius function, we have
\begin{lemma}\label{lem-mobius-subposet}
Let $\Cc,\Cc'$ be two finite posets with $\Cc\subseteq \Cc'$, and let $\mu,\mu'$ be the M\"{o}bius functions of $\Cc,\Cc'$ respectively.
For $x<y \in \Cc$, suppose that
$$\{z \in \Cc \mid x<z<y\}=\{z \in \Cc' \mid x<z<y\}$$
Then $\mu(x,y)=\mu'(x,y)$.
\end{lemma}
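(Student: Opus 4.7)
The plan is a straightforward induction on the size of the open interval $(x,y)\cap\Cc$, which by hypothesis coincides with $(x,y)\cap\Cc'$. The point is that the defining recursion rewrites as
$$\mu(x,y)=-\mu(x,x)-\sum_{z\in(x,y)\cap\Cc}\mu(x,z),$$
so $\mu(x,y)$ is determined by $\mu(x,x)=1$ together with the values of $\mu$ on strictly shorter sub-intervals, and similarly for $\mu'$.

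Before setting up the induction I would verify that the interval-matching hypothesis is inherited by every sub-interval: if $z\in(x,y)\cap\Cc$ and $w\in(x,z)\cap\Cc'$, then $x<w<y$, so $w\in(x,y)\cap\Cc'=(x,y)\cap\Cc\subseteq\Cc$, and therefore $w\in(x,z)\cap\Cc$. Combined with the trivial inclusion in the other direction coming from $\Cc\subseteq\Cc'$, this gives $(x,z)\cap\Cc=(x,z)\cap\Cc'$ for every such $z$. This is the one step that genuinely needs care; everything else is routine.

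With that in hand the induction is immediate. The base case $(x,y)\cap\Cc=\varnothing$ yields $\mu(x,y)=-1=\mu'(x,y)$ directly from the recursion. For the inductive step, the induction hypothesis gives $\mu(x,z)=\mu'(x,z)$ for each $z\in(x,y)\cap\Cc$, so the two recursive sums agree term by term, forcing $\mu(x,y)=\mu'(x,y)$. Morally the lemma is just the observation that the Möbius value on $[x,y]$ is intrinsic to that interval and depends on the ambient poset only through which elements lie strictly between $x$ and $y$.
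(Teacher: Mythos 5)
Your proof is correct, and it fills in exactly the argument the paper leaves implicit (the lemma is stated there without proof, as ``a consequence of the definition''): induction on the size of the open interval, using the recursion $\mu(x,y)=-1-\sum_{x<z<y}\mu(x,z)$. The one step that needs care --- that the interval-matching hypothesis passes to the sub-intervals $(x,z)$ --- is the step you correctly isolate and verify.
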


We also recall the Euler characteristic of a (simplicial) complex $K$  here. For a complex $K$, the Euler characteristic of $K$ is defined by
$$\chi(K)=\sum_{i=0}^{d}(-1)^i\#(K_i),$$
where $d$ is the dimension of $K$, $K_i$ denotes the set of all $i$-simplices and $\#(K_i)$ denotes the cardinality of $K_i$. Moreover,
$\widetilde{\chi}(K)=\chi(K)-1$ is called the reduced Euler characteristic of $K$. The following well-known result~\cite[Corollary 2.5.2.]{Barmak2011}, due to P. Hall, is a key connection between the M\"{o}bius function of a poset $\Cc$ and the Euler characteristic of its order complex $\Delta\Cc$. Write $\widehat{\Cc}=\Cc \cup \{0,1\}$ by adding the minimal element $0$ and the maximal element $1$ in $\Cc$. Then
$$\widetilde{\chi}(\Cc)=\widehat{\mu}(0,1),$$
where $\widehat{\mu}$ is the M\"{o}bius function of $\widehat{\Cc}$. As a consequence,
$$\chi(\Cc)=-\sum_{ x \in \Cc} \widehat{\mu}(x,1)=-\sum_{ x \in \Cc} \widehat{\mu}(0,x).$$
Moreover, we have
\begin{lemma}\label{lem-char-mobius}
Let $\Cc$ be a finite poset.  Then
$$\chi(\Cc)=\sum_{(x,y) \in \I(\Cc)}\mu(x,y).$$ where $\mu$ is the M\"{o}bius function of the poset $\Cc$.
\end{lemma}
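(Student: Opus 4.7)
The plan is to derive the identity directly from P.~Hall's formula $\chi(\Cc)=-\sum_{x\in\Cc}\widehat{\mu}(x,1)$ recalled just before the statement, combined with Lemma~\ref{lem-mobius-subposet}. The role of Lemma~\ref{lem-mobius-subposet} is to identify, for every $x<y$ with $x,y\in\Cc$, the M\"obius value $\widehat{\mu}(x,y)$ of the enlargement $\widehat{\Cc}=\Cc\cup\{0,1\}$ with the intrinsic value $\mu(x,y)$ of $\Cc$; after all, the open intervals $(x,y)$ in the two posets coincide.

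The key step is to expand each $\widehat{\mu}(y,1)$ using the defining recursion of the M\"obius function on the interval $[y,1]\subseteq\widehat{\Cc}$. Since $y$ is already strictly above $0$, the elements of this interval are $y$ itself, every $z\in\Cc$ with $z>y$, and the top element $1$. Solving the standard recursion $\sum_{y\leq z\leq 1}\widehat{\mu}(y,z)=0$ for the unknown $\widehat{\mu}(y,1)$, and then applying Lemma~\ref{lem-mobius-subposet} term-by-term to each pair $(y,z)$ with $z\in\Cc$, rewrites $\widehat{\mu}(y,1)$ as $-1-\sum_{z\in\Cc,\,z>y}\mu(y,z)$.

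Substituting this back into P.~Hall's formula and interchanging the order of summation yields $\chi(\Cc)=|\Cc|+\sum_{y<z\,\text{in}\,\Cc}\mu(y,z)$. Since $\mu(y,y)=1$ by definition of the M\"obius function, the diagonal pairs $(y,y)$ of $\I(\Cc)$ contribute exactly $|\Cc|$ to the right-hand side of the desired identity, so the expression collapses to $\sum_{(y,z)\in\I(\Cc)}\mu(y,z)$, which is the claimed equality.

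No serious obstacle is anticipated; the argument is pure bookkeeping once Lemma~\ref{lem-mobius-subposet} is in hand. The only point that deserves attention is the convention that $\I(\Cc)$ contains the reflexive pairs $(x,x)$: this is precisely what absorbs the $|\Cc|$ contribution coming from the $-1$ terms in the recursion for each $\widehat{\mu}(y,1)$, and is what makes the identity hold without any extra additive constant.
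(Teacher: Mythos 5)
Your proposal is correct and follows essentially the same route as the paper's proof: both start from Hall's formula $\chi(\Cc)=-\sum_{x\in\Cc}\widehat{\mu}(x,1)$, expand each $\widehat{\mu}(x,1)$ via the defining recursion over the interval $[x,1]$ in $\widehat{\Cc}$, and use Lemma~\ref{lem-mobius-subposet} to identify $\widehat{\mu}$ with $\mu$ on $\I(\Cc)$. The only cosmetic difference is that you split off the diagonal terms $\widehat{\mu}(x,x)=1$ and reabsorb them at the end, whereas the paper keeps them inside the sum throughout.
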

\begin{proof}
Write $\widehat{\Cc}=\Cc \cup \{0,1\}$ by adding the minimal element $0$ and the maximal element $1$ in $\Cc$, and let $\widehat{\mu},\mu$ be the  M\"{o}bius function of $\widehat{\Cc}, \Cc$ respectively.
Lemma~\ref{lem-mobius-subposet} implies that $\mu$ is the restriction of $\widehat{\mu}$ on $ \I(\Cc)$.
By Hall's Theorem,
\begin{align*}
\chi(\Cc)&=\widehat{\mu}(0,1)+1=-\sum_{0<x\leq 1} \widehat{\mu}(x,1)+1=-\sum_{0<x<1} \widehat{\mu}(x,1)-1+1\\
&=-\sum_{0<x<1} \widehat{\mu}(x,1)=-\sum_{x \in \Cc} \widehat{\mu}(x,1)=-\sum_{x \in \Cc} \left(-\sum_{x \leq y<1} \widehat{\mu}(x,y)\right)\\
&=\sum_{x \leq y~\text{and}~x,y\in \Cc} \widehat{\mu}(x,y)=\sum_{(x,y)\in \I(\Cc)} \widehat{\mu}(x,y)\\
&=\sum_{(x,y) \in \I(\Cc)} \mu(x,y).
\end{align*}
The last equation holds as $\mu$ is the restriction of $\widehat{\mu}$ on $ \I(\Cc)$.
\end{proof}

\begin{lemma}\label{lem-char-Mob-XX}
Let $G$ be a group and $\XX$ be a set consisting of some proper subgroups of $G$. Let $\Cc(\XX)$ be the set of all right cosets $Hx$ of $G$ with $H \in \XX$ and $x \in G$. Then
$$\chi(\Cc(\XX))=-\sum_{H \in \XX} \mu(H,G)|G:H|,$$
where $\mu$ is the M\"{o}bius function of the poset $\XX \cup \{G\}$.
\end{lemma}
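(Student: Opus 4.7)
The plan is to reduce the computation to Hall's formula $\chi(\Cc)=-\sum_{x\in\Cc}\widehat{\mu}(x,\widehat{1})$, where $\widehat{\mu}$ denotes the M\"obius function of $\widehat{\Cc}=\Cc\cup\{\widehat{0},\widehat{1}\}$. Applied to $\Cc=\Cc(\XX)$, the task becomes to identify the values $\widehat{\mu}(Hx,\widehat{1})$, and the key claim is that they coincide with $\mu(H,G)$, the value of the M\"obius function of $\XX\cup\{G\}$ at the pair $(H,G)$.

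To establish this, I would first record a basic coset-inclusion lemma: for $H,K\in\XX$ and $x,y\in G$, one has $Hx\subseteq Ky$ if and only if $H\leq K$ and $Kx=Ky$. Indeed, from $x\in Hx\subseteq Ky$ we obtain $xy^{-1}\in K$, so $Kx=Ky$; then $Hx\subseteq Kx$ yields $H\leq K$ after right-multiplication by $x^{-1}$. Consequently, for a fixed coset $Hx\in\Cc(\XX)$, the elements of $\Cc(\XX)$ strictly above $Hx$ are precisely the cosets $Kx$ with $H<K\in\XX$, and so the assignment $Kx\mapsto K$ together with $\widehat{1}\mapsto G$ is an order isomorphism from the interval $[Hx,\widehat{1}]$ in $\widehat{\Cc(\XX)}$ onto the interval $[H,G]$ in $\XX\cup\{G\}$.

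Because M\"obius values are determined by the open-interval structure (this is the content of Lemma~\ref{lem-mobius-subposet} applied recursively along the chain of successive intervals), the isomorphism above forces $\widehat{\mu}(Hx,\widehat{1})=\mu(H,G)$ for every $H\in\XX$ and every right coset $Hx$. Substituting into Hall's formula and grouping the elements of $\Cc(\XX)$ by their representing subgroup, noting that each $H\in\XX$ contributes exactly $|G:H|$ distinct right cosets, gives
\begin{equation*}
\chi(\Cc(\XX))=-\sum_{C\in\Cc(\XX)}\widehat{\mu}(C,\widehat{1})=-\sum_{H\in\XX}\sum_{Hx\in G/H}\mu(H,G)=-\sum_{H\in\XX}\mu(H,G)\,|G:H|,
\end{equation*}
which is the desired identity.

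I do not expect any serious obstacle here; the only delicate point is the justification that $\widehat{\mu}(Hx,\widehat{1})$ depends only on $H$, and this is handled cleanly by the interval isomorphism above.
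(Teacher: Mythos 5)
Your proposal is correct and follows essentially the same route as the paper: both apply Hall's formula $\chi(\Cc(\XX))=-\sum_{C}\widehat{\mu}(C,\widehat{1})$ and then identify $\widehat{\mu}(Hx,\widehat{1})$ with $\mu(H,G)$ via the interval isomorphism $T\mapsto Tx$, before counting $|G:H|$ cosets per subgroup. Your explicit coset-inclusion lemma ($Hx\subseteq Ky$ iff $H\leq K$ and $Kx=Ky$) just spells out a detail the paper leaves implicit.
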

\begin{proof}
Write $\widehat{\Cc}=\Cc(\XX) \cup \{0,G\}$ by adding the minimal element $0$ and the maximal element $G$. By Hall’s theorem,
$$\chi(\Cc(\XX))=-\sum_{Hx \in \Cc(\XX)}\widehat{\mu}(Hx,G),$$
where $\widehat{\mu}$ is the the M\"{o}bius function of the poset $\widehat{\Cc}$. For each $Hx \in \Cc(\XX)$, set
$$\Aa=\{Tx \in \Cc(\XX) \mid H \leq T \leq G\}~
\text{and}~\Bb=\{T \in \Cc \mid H \leq T \leq G\},$$
there is a bijective poset map $f$ from $\Bb$ to $\Aa$ with $f(T)=Tx$. Hence $\Bb$ and $\Aa$ are isomorphic as posets. Hence, for each $Hx \in \Cc(\XX)$, we have
$$\mu(H,G)=\widehat{\mu}(Hx,G),$$
where $\mu$ is the the M\"{o}bius function of the poset $\XX \cup \{G\}$. Note that 
for each $H \in \XX$, there are exactly $|G:H|$ cosets of $H$ in $\Cc(\XX)$. Thus
$$\chi(\Cc(\XX))=-\sum_{Hx \in \Cc(\XX)}\widehat{\mu}(Hx,G)=-\sum_{H \in \XX}\mu(H,G)|G:H|,$$
The result is proved.
\end{proof}

\begin{corollary}\label{cor-char-intersection}
Let $G$ be a group and $p$ be a prime. Suppose that $G$ is not a $p$-group. Then
$$\chi(\Cc_p(G))=-\sum_{H \in \II_p(G)} \mu(H,G)|G:H|,$$
where $\mu$ is the M\"{o}bius function of the poset $\II_p(G) \cup \{G\}$.
\end{corollary}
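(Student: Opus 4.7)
The plan is to combine the two main tools developed in this section: the homotopy equivalence $\Cc_p(G) \simeq \Cc(\II_p(G))$ from Lemma~\ref{lem-homo-equ}, and the Möbius-function formula for Euler characteristics of coset posets from Lemma~\ref{lem-char-Mob-XX}. Since homotopy-equivalent spaces have the same Euler characteristic, the first step is to note that
$$\chi(\Cc_p(G)) = \chi(\Cc(\II_p(G))).$$

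Next, I would verify the hypothesis of Lemma~\ref{lem-char-Mob-XX}, which requires that $\II_p(G)$ consists of \emph{proper} subgroups of $G$. This is precisely where the assumption that $G$ is not a $p$-group enters: every Sylow $p$-subgroup of $G$ is then a proper subgroup, and since each element of $\II_p(G)$ is contained in some Sylow $p$-subgroup, every $H \in \II_p(G)$ is proper in $G$.

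Applying Lemma~\ref{lem-char-Mob-XX} with $\XX = \II_p(G)$ then yields
$$\chi(\Cc(\II_p(G))) = -\sum_{H \in \II_p(G)} \mu(H,G)\,|G:H|,$$
where $\mu$ is the Möbius function of the poset $\II_p(G) \cup \{G\}$. Combining this with the equality above gives the claimed formula.

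There is no real obstacle here: the statement is essentially a one-line consequence of the preceding two lemmas. The only subtle point worth flagging in the write-up is that the Möbius function in the conclusion is computed on the poset $\II_p(G) \cup \{G\}$ (not on $\Ss_p(G)\cup\{1,G\}$ as in Theorem~\ref{thm-p-group}); Lemma~\ref{lem-mobius-subposet} would be invoked if one later needed to compare the two Möbius values, but for this corollary itself no such comparison is required.
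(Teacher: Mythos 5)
Your proposal is correct and follows exactly the paper's own argument: invoke Lemma~\ref{lem-homo-equ} to get $\chi(\Cc_p(G))=\chi(\Cc(\II_p(G)))$, observe that the non-$p$-group hypothesis makes every member of $\II_p(G)$ a proper subgroup, and then apply Lemma~\ref{lem-char-Mob-XX} with $\XX=\II_p(G)$. Nothing further is needed.
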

\begin{proof}
It follows from Lemma~\ref{lem-homo-equ} that $\Cc_p(G)$ is homotopy-equivalent to $\II_p(G)$. Hence
$\chi(\Cc_p(G))=\chi(\Cc(\II_p(G)))$. Since $G$ is not a $p$-group, $\II_p(G)$  consists of some proper subgroups of $G$.
By Lemma~\ref{lem-char-Mob-XX}, 
$$\chi(\Cc_p(G))=\chi(\Cc(\II_p(G)))=-\sum_{H \in \II_p(G)}\mu(H,G)|G:H|,$$
as desired.
\end{proof}

\begin{proof}[\bf Proof of Theorem~\ref{thm-p-group}]
Assume that $G$ is a $p$-group; $\Cc_p(G)$ has the unique maximal element. Then $\Cc_{p}(G)$ is (canonical) contractible and $\chi(\Cc_p(G))=1$. Hence $(2)$ follows from $(1)$. Clearly $(2)$ implies $(3)$, and $(3)$ implies $(4)$ by definition. Now we only have to show that $(4)$ can implies $(1)$. Assume that $G$ is not a $p$-group; Then
$$1=\chi(\Cc_p(G))=-\sum_{H \in \Ss_p(G)\cup \{1\}} \mu(H,G)|G:H|,$$
where $\mu$ is the M\"{o}bius function of the poset $\Ss_p(G)\cup \{1\}$. Note that, for each $H \in  \Ss_p(G)\cup \{1\}$, $\mu(H,G)$ is an integer and $H$ is a proper $p$-subgroup of $G$. Hence $|G|_{p'}$ divides $|G:H|$, which implies that $|G|_{p'}=1$. Thus $G$ is a $p$-group, as desired.
\end{proof}

\begin{proof}[\bf Proof of Theorem~\ref{thm-p-closed}]
Let $P$ be a Sylow $p$-subgroup of $G$ and denote by $\Omega$ the set of all right cosets of $P$ in $G$.

We firstly show that $(1)$ implies $(2)$. Assume that $G$ is $p$-closed.
Note that, in this case, $P \unlhd G$ is the unique Sylow $p$-subgroup of $G$. Hence $\Omega$ is exactly the set of all maximal elements of $\Cc_p(G)$. By Lemma~\ref{lem-homo-equ}, $\Cc_p(G)$ is homotopic to $\Omega$, which is a discrete space of $|G|_{p'}$ points, as desired.

Notice that $(3)$ consequently follows from $(2)$,  and we only show that
$(3)$ can imply $(1)$. For each $Qx \in \Cc_p(G)$, as $G$ is the union of all right cosets of $P$ in $G$, there exists $Py \in \Omega$ such that $Qx \cap Py\neq \varnothing$. Take $z \in Qx \cap Py$, we will see
$$Qx=Qz \geq \{z\}\leq Pz=Py.$$
Hence $Qx, Py$ stay in the common connected component. Hence we may write $\Tt=\{[Pt]: Pt \in \Omega\}$ as the set of all connected components of $\Cc_p(G)$, where $[Pt]$ is the connected component in $\Cc_p(G)$ containing $Pt$.
Clearly $|\Tt|\leq |\Omega|=|G|_{p'}$.

If $P$ is not normal in $G$, then we can take $P^a\neq P$ for some $a \in G$. Let $t \in P^a \setminus P$. Note that $t$ is a $p$-element and so $\langle t\rangle  \in \Cc_p(G)$. It follows that
$$ P\geq \{1\} \leq \langle t\rangle \geq \{t\} \leq Pt.$$
Hence $[P]=[Pt]$. As $Pt \neq P$, $|\Tt|\leq |\Omega|-1=|G|_{p'}-1$, contrary to the hypothesis that $\Cc_p(G)$ has exactly $|G|_{p'}$  connected components. Hence $G$ is $p$-closed and the proof is complete.
\end{proof}

\section{Proof of Theorem~\ref{thm-p-TI}}
In this section, we will study the $p$-local Euler characteristic for finite groups which are the direct product of $p$-TI-groups for the proof of Theorem~\ref{thm-p-TI}.

Recall that the direct product $\Cc\times \Dd$ of two posets $\Cc$ and $\Dd$ is the poset whose underlying set is the cartesian product $\{(x,y)\mid x\in \Cc,y\in \Dd\}$ and whose order relation is given by $(x_1,y_1)\leq_{\Cc\times \Dd}(x_2, y_2)$ if and only if $x_1\leq_{\Cc}x_2$ and $y_1\leq_{\Dd}y_2$.
\begin{lemma}\emph{\cite[Proposition 5]{Rota1964}}\label{lem-mobius-directprod}
Let $\Cc$ and $\Dd$ be two finite posets and let $\left(x_1, y_1\right) \leq\left(x_2, y_2\right)$ be in $\Cc \times \Dd$. Then
$$\mu_{\Cc \times \Dd}\left(\left(x_1, y_1\right),\left(x_2, y_2\right)\right)=\mu_{\Cc}\left(x_1, x_2\right) \mu_{\Dd}\left(y_1, y_2\right),$$
where $\mu_{\Cc}, \mu_{\Dd}~\text{and}~\mu_{\Cc \times \Dd}$ are the M\"{o}bius functions of $\Cc,\Dd$ and $\Cc \times \Dd$, respectively.
\end{lemma}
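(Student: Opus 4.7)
The plan is to verify directly that the candidate function
\[
\nu\bigl((x_1,y_1),(x_2,y_2)\bigr) := \mu_{\Cc}(x_1,x_2)\,\mu_{\Dd}(y_1,y_2)
\]
coincides with the M\"obius function $\mu_{\Cc\times \Dd}$ by appealing to the uniqueness of the M\"obius function. Recall that the M\"obius function of any finite poset is uniquely characterised by the conditions $\mu(a,a)=1$ and $\sum_{a\leq c\leq b}\mu(a,c)=\delta(a,b)$, so it is enough to show that $\nu$ satisfies both of these on $\Cc\times \Dd$.

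First I would observe the diagonal case $\nu((x,y),(x,y))=\mu_{\Cc}(x,x)\mu_{\Dd}(y,y)=1$. The key step is then the following computation, based on the remark that, by definition of the order on a product poset, $(z,w)$ lies in the interval $[(x_1,y_1),(x_2,y_2)]$ if and only if $z\in[x_1,x_2]$ and $w\in[y_1,y_2]$. Hence for any $(x_1,y_1)\leq (x_2,y_2)$,
\[
\sum_{(x_1,y_1)\leq (z,w)\leq (x_2,y_2)} \nu\bigl((x_1,y_1),(z,w)\bigr)
=\Bigl(\sum_{x_1\leq z\leq x_2}\mu_{\Cc}(x_1,z)\Bigr)\Bigl(\sum_{y_1\leq w\leq y_2}\mu_{\Dd}(y_1,w)\Bigr).
\]
Applying the defining recurrence to each factor separately yields $\delta(x_1,x_2)\,\delta(y_1,y_2)$, which is exactly $\delta((x_1,y_1),(x_2,y_2))$ since equality of ordered pairs is equivalent to coordinate-wise equality.

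Uniqueness of the M\"obius function then forces $\nu=\mu_{\Cc\times \Dd}$, which is the desired identity. I do not expect a substantive obstacle: the whole argument is a short verification, and its only delicate point is recognising that intervals in $\Cc\times \Dd$ factor as products of intervals in $\Cc$ and $\Dd$, which is immediate from the coordinate-wise definition of the order on the product.
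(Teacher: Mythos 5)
Your verification is correct and complete: the interval $[(x_1,y_1),(x_2,y_2)]$ in $\Cc\times\Dd$ does factor coordinate-wise, so the sum of $\nu$ over it splits as a product of the two one-variable sums, each of which collapses to a delta by the defining recurrence, and uniqueness of the M\"obius function (by induction on interval size) then gives $\nu=\mu_{\Cc\times\Dd}$. The paper itself offers no proof here, citing Rota's Proposition~5 directly, and your argument is exactly the standard one behind that citation, so there is nothing to reconcile.
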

\begin{lemma}\label{lem-char-directprod}
Let $\Cc$ and $\Dd$ be a finite poset. Then
$$\chi(\Cc\times \Dd)=\chi(\Cc)\chi(\Dd).$$
\end{lemma}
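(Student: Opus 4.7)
The plan is to combine the two tools immediately preceding the statement: the formula $\chi(\Cc) = \sum_{(x,y) \in \I(\Cc)} \mu(x,y)$ from Lemma~\ref{lem-char-mobius}, and the multiplicativity of the Möbius function on products from Lemma~\ref{lem-mobius-directprod}. The key observation is that a pair $\bigl((x_1,y_1),(x_2,y_2)\bigr)$ lies in $\I(\Cc \times \Dd)$ if and only if $(x_1,x_2) \in \I(\Cc)$ and $(y_1,y_2) \in \I(\Dd)$, so the index set of the sum defining $\chi(\Cc \times \Dd)$ factors as a Cartesian product $\I(\Cc) \times \I(\Dd)$.

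Concretely, I would first apply Lemma~\ref{lem-char-mobius} to expand
\[
\chi(\Cc \times \Dd) = \sum_{((x_1,y_1),(x_2,y_2)) \in \I(\Cc \times \Dd)} \mu_{\Cc \times \Dd}\bigl((x_1,y_1),(x_2,y_2)\bigr),
\]
then substitute the product formula from Lemma~\ref{lem-mobius-directprod} into the summand, and finally split the double sum along the factorisation of the index set to recover $\chi(\Cc) \cdot \chi(\Dd)$ via Lemma~\ref{lem-char-mobius} applied to each factor.

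There is essentially no obstacle here: the only subtlety is the easy bookkeeping check that $\I(\Cc \times \Dd) = \I(\Cc) \times \I(\Dd)$ as sets of pairs (after the obvious reshuffling of coordinates), which is immediate from the componentwise definition of the order on $\Cc \times \Dd$. Everything else is a direct substitution, so the proof should be no more than a few lines.
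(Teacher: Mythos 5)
Your proposal is correct and is exactly the argument the paper gives: expand $\chi(\Cc\times\Dd)$ via Lemma~\ref{lem-char-mobius}, substitute $\mu_{\Cc\times\Dd}=\mu_{\Cc}\mu_{\Dd}$ from Lemma~\ref{lem-mobius-directprod}, and factor the sum over $\I(\Cc\times\Dd)\cong\I(\Cc)\times\I(\Dd)$. Nothing further is needed.
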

\begin{proof}
Let $\mu_{\Cc},\mu_{\Dd}~\text{and}~\mu$ be the M\"{o}bius functions of $\Cc,\Dd$ and $\Cc \times \Dd$, respectively. By Lemma~\ref{lem-char-mobius} and Lemma~\ref{lem-mobius-directprod}, we can get that
\begin{align*}
\chi(\Cc\times \Dd)
&=\sum_{((x_1,x_2),(y_1,y_2))\in \I(\Cc\times \Dd)}\mu((x_1,x_2),(y_1,y_2))\\
&=\sum_{((x_1,x_2),(y_1,y_2))\in \I(\Cc\times \Dd)}\mu_{\Cc}(x_1,y_1)\mu_{\Dd}(x_2,y_2)\\
&=\sum_{(x_1,y_1)\in \I(\Cc)}\mu_{\Cc}(x_1,y_1)\sum_{(x_2,y_2)\in \I(\Dd)}\mu_{\Dd}(x_2,y_2)\\
&=\chi(\Cc)\chi(\Dd)
\end{align*}
The proof is complete.
\end{proof}
\begin{lemma}
Let $G_1$ and $G_2$ be two finite groups and $p$ be a prime. Then $\Cc_p(G_1)\times \Cc_p(G_2)$ is homotopy-equivalent to
 $\Cc_p(G_1\times G_2)$. In particular,
 $$\chi(\Cc_p(G_1\times G_2))=\chi(\Cc_p(G_1))\chi(\Cc_p(G_2)).$$
\end{lemma}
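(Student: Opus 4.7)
The plan is to reduce the problem to an isomorphism of posets via the $\II_p$-subposet homotopy equivalence already established in Lemma~\ref{lem-homo-equ}. First I would observe that every Sylow $p$-subgroup of $G_1 \times G_2$ has the form $P_1 \times P_2$ with $P_i \in \Syl_p(G_i)$, so that
$$\II_p(G_1 \times G_2)=\{H_1 \times H_2 \mid H_i \in \II_p(G_i),\ i=1,2\}.$$
Consequently, every element of $\Cc(\II_p(G_1 \times G_2))$ can be written uniquely as a product $H_1 x_1 \times H_2 x_2$ with $H_i \in \II_p(G_i)$ and $x_i \in G_i$, and the assignment $(H_1 x_1, H_2 x_2) \mapsto H_1 x_1 \times H_2 x_2$ is a bijective poset map from $\Cc(\II_p(G_1)) \times \Cc(\II_p(G_2))$ to $\Cc(\II_p(G_1 \times G_2))$, whose inverse is also a poset map. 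So these two posets are isomorphic.

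Next I would upgrade the homotopy equivalence of Lemma~\ref{lem-homo-equ} to a product. For each $i$, let $f_i:\Cc_p(G_i)\to\Cc(\II_p(G_i))$ be the poset map constructed in the proof of Lemma~\ref{lem-homo-equ} and let $\iota_i$ denote the inclusion. Then $f_1 \times f_2$ and $\iota_1 \times \iota_2$ are poset maps between the corresponding product posets (which carry the coordinate-wise order), $(f_1 \times f_2)\circ(\iota_1 \times \iota_2)=\Id$, and $(\iota_1\times \iota_2)\circ(f_1\times f_2)\geq\Id$ in the product order. By Lemma~\ref{lem-homotopic} this yields a homotopy equivalence
$$\Cc_p(G_1)\times\Cc_p(G_2)\simeq\Cc(\II_p(G_1))\times\Cc(\II_p(G_2)).$$
Combined with the poset isomorphism from the first paragraph and a second application of Lemma~\ref{lem-homo-equ} to $G_1\times G_2$, this chains into
$$\Cc_p(G_1\times G_2)\simeq\Cc(\II_p(G_1\times G_2))\cong\Cc(\II_p(G_1))\times\Cc(\II_p(G_2))\simeq\Cc_p(G_1)\times\Cc_p(G_2),$$
which is the desired homotopy equivalence. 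The in-particular statement then follows immediately from Lemma~\ref{lem-char-directprod} applied to $\Cc=\Cc_p(G_1)$ and $\Dd=\Cc_p(G_2)$.

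The only conceptual step that deserves care is the description of $\II_p(G_1\times G_2)$ as the set of products $H_1\times H_2$; the other steps are a mechanical combination of an already-proved poset homotopy lemma and a product identity for Möbius functions. I do not expect a serious obstacle here, since the ordering of cosets in the product is plainly the product ordering, and Lemma~\ref{lem-homotopic} is robust under taking product poset maps.
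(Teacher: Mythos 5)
Your proof is correct, but it takes a different route from the paper's. The paper works directly with $\Cc_p(G_1)\times\Cc_p(G_2)$ and $\Cc_p(G_1\times G_2)$: it defines $\psi\colon (Xx,Yy)\mapsto (X\times Y)(x,y)$ and $\phi\colon H(x,y)\mapsto (p_1(H)x,p_2(H)y)$ using the coordinate projections $p_i$, notes that any $p$-subgroup $H\leq G_1\times G_2$ satisfies $H\leq p_1(H)\times p_2(H)$ so that $\psi\circ\phi\geq\Id$ while $\phi\circ\psi=\Id$, and concludes by Lemma~\ref{lem-homotopic}. You instead pass through Lemma~\ref{lem-homo-equ} twice and identify $\Cc(\II_p(G_1\times G_2))$ with the product poset $\Cc(\II_p(G_1))\times\Cc(\II_p(G_2))$ on the nose. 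Your key claim --- that $\II_p(G_1\times G_2)$ consists exactly of the subgroups $H_1\times H_2$ with $H_i\in\II_p(G_i)$ --- is true: Sylow subgroups of the product are products of Sylow subgroups, intersections distribute over the direct product, and conversely $H_1\times H_2=\bigcap_{i,j}(P_i\times Q_j)$ realizes any pair of intersections; and containment of nonempty product sets is componentwise, so your bijection is indeed a poset isomorphism. The step where you take the product of the two homotopy equivalences from Lemma~\ref{lem-homo-equ} is also sound, since the inequality $(\iota_1\times\iota_2)\circ(f_1\times f_2)\geq\Id$ holds coordinatewise and Lemma~\ref{lem-homotopic} applies verbatim to the product poset. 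What the paper's argument buys is brevity and independence from any structural description of $\II_p(G_1\times G_2)$; what yours buys is that the middle step is a genuine poset isomorphism rather than merely a homotopy equivalence, which makes the multiplicativity of $\chi$ transparent. Both then conclude via Lemma~\ref{lem-char-directprod} in the same way.
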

\begin{proof}
Let $p_i: G_1\times G_2\rightarrow G_i$ be a projection such that $p_i(x_1,x_2)=x_i$, where $i=1,2$.
If $H$ is a $p$-subgroup of $G_1 \times G_2$, then $p_1(H)$ and $p_2(H)$ are $p$-subgroups of $G_1$ and $G_2$ respectively, and clearly $H \leq p_1(H) \times p_2(H)$.
Write $\mathcal{A}=\Cc_p(G_1)\times \Cc_p(G_2)$ and $\mathcal{B}=\Cc_p(G_1\times G_2)$. Set maps
$$\psi:\mathcal{A}\rightarrow \mathcal{B}; (Xx,Yy)\mapsto (X \times Y)(x,y);$$
$$\phi:\mathcal{B}\rightarrow \mathcal{A}; H(x,y)\mapsto (p_1(H)x, p_2(H)y);$$
where $X,Y,H$ are $p$-subgroups of $G_1,G_2, G_1 \times G_2$ respectively. It is not difficult to check that $\phi\circ\psi=\Id_{\mathcal{A}}$ and for each $H(x,y) \in \Cc_p(G_1 \times G_2)$,
$$\psi \circ \phi (H(x,y))=(p_1(H) \times p_2(H))(x,y)\geq H(x,y),$$
which implies that $\psi \circ \phi \geq \Id_{\mathcal{B}}$.  By Lemma~\ref{lem-homotopic},  $\psi \circ \phi \simeq\Id_{\mathcal{B}}$. Thus $\Cc_p(G_1)\times \Cc_p(G_2)$ is homotopy-equivalent to $\Cc_p(G_1\times G_2)$, moreover,
$$\chi(\Cc_p(G_1\times G_2))=\chi(\Cc_p(G_1)\times\Cc_p(G_2)).$$
It follows from Lemma~\ref{lem-char-directprod} that
$\chi(\Cc_p(G_1\times G_2))=\chi(\Cc_p(G_1))\chi(\Cc_p(G_2))$, as desired.
\end{proof}

\begin{proposition}\label{prop-p-TI}
Let $G$ be a $p$-TI-group for some prime $p$.  Then $$\chi(\Cc_p(G))=s|G|_{p'}-(s-1)|G|,$$
where $s$ is the number of all Sylow $p$-subgroups of $G$.
\end{proposition}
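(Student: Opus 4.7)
The plan is to invoke Corollary~\ref{cor-char-intersection} and compute everything explicitly using the very restrictive structure that the $p$-TI hypothesis forces on $\II_p(G)$.

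First I would dispatch the degenerate cases. If $G$ is a $p$-group then $s=1$ and $|G|_{p'}=1$, so the claimed formula collapses to $\chi(\Cc_p(G))=1$, which is part of Theorem~\ref{thm-p-group}. If $G$ is $p$-closed but not a $p$-group, then again $s=1$ and $\II_p(G)=\{P\}$ for the unique Sylow $p$-subgroup $P$, so Corollary~\ref{cor-char-intersection} gives $\chi(\Cc_p(G))=-\mu(P,G)\,|G:P|=|G|_{p'}$, which matches the claim (and also agrees with Theorem~\ref{thm-p-closed}).

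The main case is $G$ a $p$-TI group that is not $p$-closed, so $s\geq 2$. Since any two distinct Sylow $p$-subgroups of $G$ intersect trivially, every intersection involving two or more distinct Sylows collapses to $1$, hence
$$\II_p(G)=\{1\}\cup\Syl_p(G).$$
In the poset $\II_p(G)\cup\{G\}$, the element $1$ is therefore the minimum, $G$ is the maximum, and the $s$ Sylow subgroups form a pairwise incomparable antichain strictly between them. From the defining recursion of the M\"{o}bius function on this very simple interval one reads off $\mu(P_i,G)=-1$ for each Sylow $P_i$, and then $\mu(1,G)=s-1$ from $\sum_{1\leq z\leq G}\mu(1,z)=0$. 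Substituting into Corollary~\ref{cor-char-intersection} yields
$$\chi(\Cc_p(G))=-\mu(1,G)\,|G|-\sum_{P_i\in\Syl_p(G)}\mu(P_i,G)\,|G:P_i|=-(s-1)|G|+s|G|_{p'},$$
as desired.

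The only genuine obstacle is the M\"{o}bius computation in the middle step, but it is essentially forced by the $p$-TI hypothesis: once the interval structure of $\II_p(G)\cup\{G\}$ is pinned down, no further group-theoretic or homotopy-theoretic input is needed beyond what Lemma~\ref{lem-homo-equ} and Corollary~\ref{cor-char-intersection} already supply.
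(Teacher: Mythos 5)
Your proof is correct and follows essentially the same route as the paper: identify $\II_p(G)=\Syl_p(G)\cup\{1\}$ from the TI hypothesis, read off $\mu(P,G)=-1$ and $\mu(1,G)=s-1$, and substitute into Corollary~\ref{cor-char-intersection}. Your separate treatment of the $p$-group and $p$-closed cases (where $1\notin\II_p(G)$ and Corollary~\ref{cor-char-intersection} needs the proper-subgroup hypothesis) is a small but welcome extra care that the paper's proof glosses over.
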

\begin{proof}
For a $p$-TI-group $G$, by definition, $\II_p(G)=\Syl_p(G) \cup \{1\}$. Notice that $\mu(P,G)=-1$ for $P \in \Syl_p(G)$, and $\mu(1,G)=s-1$, where $\mu$ is the M\"{o}bius functions of $\II_p(G) \cup \{G\}$. It follows from
Corollary~\ref{cor-char-intersection} that

$$
\begin{aligned}
\chi(\Cc_p(G))&=-\sum_{H \in \II_p(G)} \mu(H,G)|G:H|\\
&=-((-1)s|G|_{p'}+(s-1)|G|)\\
&=s|G|_{p'}-(s-1)|G|.
\end{aligned}
$$
The result is proved.
\end{proof}

\begin{lemma}\label{lem-2-3}
Let $P$ be a Sylow $2$-subgroup of $G$. Suppose that $|P|=2$ and $|G:\N_G(P)|=3$. Then $G\cong S_3\times A$, where $A$ is a $2'$-group.
\end{lemma}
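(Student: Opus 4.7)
The plan is to pin down $G$ by analysing its involutions. Since $|P|=2$, every involution of $G$ generates a Sylow $2$-subgroup and vice versa, so the three Sylow $2$-subgroups (coming from $|\Syl_2(G)|=|G:\N_G(P)|=3$) are in bijection with the three involutions of $G$; call them $t_1,t_2,t_3$. They form a single $G$-conjugacy class.

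The key step is to control $H:=\langle t_1,t_2\rangle$, which is dihedral of order $2n$ with $n=|t_1t_2|$. Since $|P|=2$, $G$ contains no $2$-subgroup of order greater than $2$; in particular $H$ contains no Klein four-subgroup, which forces $n$ to be odd (if $n$ were even, writing $r=t_1t_2$, the set $\{1,t_1,r^{n/2},t_1r^{n/2}\}$ would be such a subgroup). When $n$ is odd, the $n$ reflections in $D_{2n}$ are all conjugate within $H\leq G$ and each is an involution of $G$; since $G$ has only three involutions we get $n\leq 3$, and $t_1\neq t_2$ rules out $n=1$, so $n=3$. Therefore $H\cong S_3$, and because $H$ already contains all three involutions of $G$, we have $S:=\langle t_1,t_2,t_3\rangle = H\cong S_3$.

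Once $S\cong S_3$ is in hand, the splitting is formal. $S$ is generated by all involutions of $G$, so $S$ is characteristic, in particular $S\trianglelefteq G$. The conjugation homomorphism $G\to\Aut(S)$ has kernel $\C_G(S)$; since $\Z(S)=1$ and $\Out(S_3)=1$, we have $\Aut(S)=\Inn(S)\cong S$, and the restriction of conjugation to $S$ already exhausts $\Aut(S)$. Hence $G=S\cdot\C_G(S)$ and $S\cap\C_G(S)=\Z(S)=1$, so $G=S\times A$ with $A:=\C_G(S)$. Finally, $|A|=|G|/6$ is odd because $|G|_2=2=|S|_2$, so $A$ is a $2'$-group, completing the proof.

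The main obstacle, or really the main trick, is the dihedral bound $n=3$ in the second step: combining the upper bound on the $2$-part of $|G|$ with the scarcity of involutions squeezes $\langle t_1,t_2\rangle$ down to $S_3$. The remaining algebra is automatic from the rigidity of $\Aut(S_3)$.
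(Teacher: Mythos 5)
Your proof is correct, and it takes a genuinely different and shorter route than the paper's. The paper argues by minimal counterexample: it passes to the core $K=\Core_G(\N_G(P))$, invokes the Feit--Thompson theorem to make a minimal normal subgroup $N\leq K$ elementary abelian, applies induction to $G/N$, and then uses a coprime-action computation to show $N\leq \Z(G)$ and to adjust a generator so that an $S_3$ splits off. You instead work directly with the three involutions (which, because $|G|_2=2$, biject with the Sylow $2$-subgroups): the subgroup $\langle t_1,t_2\rangle$ is dihedral of order $2n$, the absence of a Klein four-subgroup forces $n$ odd, and the count of involutions forces $n=3$, so the involutions generate a characteristic copy of $S_3$; the splitting $G=S\times\C_G(S)$ then falls out of $\Aut(S_3)=\Inn(S_3)$ and $\Z(S_3)=1$. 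All steps check out (in particular $r^{n/2}$ is central in $D_{2n}$ for $n$ even, so the claimed Klein four-subgroup exists, and for $n$ odd all $n$ elements outside $\langle t_1t_2\rangle$ are involutions of $G$). What your approach buys is the elimination of both the induction and the appeal to Feit--Thompson, making the lemma essentially self-contained; the paper's inductive scheme is heavier but is the kind of argument that generalizes to other ``few Sylow subgroups'' configurations where no involution-counting trick is available.
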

\begin{proof}
Suppose that the result is false and let $G$ be a counterexample with minimal order. Considering the action of $G$ on the set $\Syl_2(G)$ via conjugation, as $|\Syl_2(G)|=|G:\N_G(P)|=3$, one can obtain that $G/K$ is isomorphic to a subgroup of $S_3$, where
$K=\Core_G(\N_G(P))$ is the largest normal subgroup of $G$ contained in $\N_G(P)$, which is also the kernel of this action.  
If $|K|$ is divisible by $2=|G|_2=|P|$, then $P\leq K$. By Frattini Argument, $G=K\N_G(P)=\N_G(P)$, which is a contradiction.  
Hence $K$ is of odd order.

We shall show that $K\neq 1$; Otherwise $G$ is isomorphic to a subgroup of $S_3$. Since $G$ acts on $\Syl_2(G)$ transitively and $|G|_2=2$, we can deduce that $G\cong S_3$, contrary to the choice of $G$.

Now let $N$ be a minimal normal subgroup of $G$ contained in $K$. 
As $K$ is of odd order, Feit-Thompson theorem implies that $N$ is solvable, hence $N$ is elementary Abelian. In the quotient group $G/N$, $PN/N$ is a Sylow $2$-subgroup of $G/N$ with order $2$, and as $N \leq K \leq \N_G(P)$, it implies that
$$|G/N:\N_{G/N}(PN/N)|=|G/N:\N_G(P)/N|=|G:\N_G(P)|=3.$$
The minimality of $G$ implies that $G/N=T/N\times A/N$, where $T,A\leq G$ satisfy that $T/N\cong S_3$ and $A/N$ is a $2'$-subgroup.
Note that $|G:T|$ is not divisible by $2$, $P\leq T$ and $G=\N_G(P)T$ by Frattini Argument. Hence 
$$|T:\N_T(P)|=|T\N_G(P):\N_G(P)|=|G:\N_G(P)|=3.$$
If $T<G$, applying induction on $T$, we have that $T=S \times R$, where $S,R \leq G, S \cong S_3$ and $R$ is a $2'$-subgroup. Note that $P \leq S$ and $R$ centralizes $P$. Hence $R \leq \N_T(P)$ and it implies that $\N_T(P)=\N_S(P)R=PR=P \times R$.
Since $N \leq \N_G(P) \cap T=\N_T(P)$ and $N$ is a $2'$-group, $N \leq R$.
As $SR \cap A=T \cap A=N$, we can and
$S \cap RA \subseteq SR \cap AR=(SR \cap A)R=NR=R$. Hence $S \cap RA \subseteq S \cap R=1$ and $G=SRA=S \times RA$, where $RA$ is a $2'$-group. This is a contradiction. Hence we have $G=T$, so $G/N \cong S_3$.

Let $X/N$ be the Sylow $3$-subgroup of $G/N$ and clearly $X \unlhd G$, moreover, $[X,P]N=X$ as $[X/N,PN/N]=X/N$. Considering the coprime action of $P$ on $X$ via conjugation, there exists a $P$-invariant Sylow $3$-subgroup $Q$ of $X$ and $X=QN$. Note that $N \leq \N_G(P)$ normalizes $P$ and $N \leq G$, hence $[N,P] \leq N \cap P=1$, i.e. $P$ acts trivially on $N$. 

Now we can claim that there exists an element $y$ of $Q$ with order $3$ such that $y \notin N$. Otherwise, every element of $Q$ with order $3$ is in $N$, which is centralized by $P$. By the coprime action, $P$ acts trivially on $Q$, hence trivially on $X=QN$, which implies that $X=[X,P]N=N$. This contradicts that $|X/N|=3$. Hence $X=\langle y\rangle N$ for some $y \in Q$ with order $3$ with $y \notin N$.

Let $P=\langle x\rangle$ for some element $x$ of order $2$. Write $\overline{G}=G/N$. Considering in the quotient group $\overline{G}$, we have 
$\overline{G}=\overline{X}\overline{P}=\langle \overline{y}, \overline{x}\rangle \cong S_3$ and 
$\overline{y}^{ \overline{x}}= \overline{y}^{-1}$. It implies that
$y^x=y^{-1}t$ for some $t \in N$. Recall that $P=\langle x\rangle$ centralizes $N$, so $x^{y}$ also centralizes $N$.
Then, as $o(x)=2,o(y)=3$,
$$x^{y}x=y^{-1}xyx=y^{-1}y^x=y^{-1}y^{-1}t=yt$$
also centralizes $N$. Notice that $t \in N$ and $N$ is Abelian, we obtain that $y$ centralizes $N$. Since $G=\langle x,y\rangle N$,   we can deduce that $N \leq \Z(G)$.

Recall that $y^x=y^{-1}t$ for some $t \in N$. As $t^{-1} \in N$ is of odd order, there exists an element $s \in N$ such that $s^2=t^{-1}$. Set $z=ys$. Since $s \in N \subseteq \Z(G)$, we will see that
$$z^x=(ys)^x=y^xs=y^{-1}ts=y^{-1}s^{-1}=(sy)^{-1}=(ys)^{-1}=z^{-1}.$$
Note that $t^3=1$ as $o(y^x)=o(y)=3$ and $t \in \Z(G)$, which implies that $s^3=1$. Hence $o(z)=3$ and $\langle z,x\rangle \cong S_3$. Note that
$$G=\langle y,x\rangle N=\langle ys,x\rangle N=\langle z,x\rangle N.$$
Since $N \leq \Z(G)$, $\langle z,x\rangle \cap N \leq \Z(\langle z,x\rangle)=1$ as $\Z(S_3)=1$. Hence $G=\langle z,x\rangle \times N \cong S_3 \times N$, which is the final contradiction.
\end{proof}

Recall that we denote by
$$\chi_p(G)=\frac{\chi(\Cc_{p}(G))}{|G|_{p'}},$$
the $p$-local Euler characteristic of $\Cc_p(G)$. Here we summarize some properties:
\begin{lemma}\label{lem-p-local-char}
Let $G,H$ be two groups and $p$ be a prime. Then
\begin{itemize}
\item[\emph{(1)}] $\chi_p(G \times H)=\chi_p(G)\chi_p(H)$;
\item[\emph{(2)}] If $N$ is a normal $p$-subgroup of $G$, then $\chi_p(G/N)=\chi_p(G)$;
\item[\emph{(3)}] If $G$ is a $p$-TI-group, then $\chi_p(G)=s-(s-1)|G|_p$, where $s$ is the number of all Sylow $p$-subgroups of $G$. In particular, if $G$ is $p$-closed, $\chi_p(G)=1$.
\item[\emph{(4)}] If $G$ is a $p$-TI-group and $G$ is not $p$-closed, then $\chi_p(G) \leq -1$, here the equation holds if and only if $p=2$ and $G\cong S_3 \times A$, where $A$ is a $2'$-group.
\end{itemize}
\end{lemma}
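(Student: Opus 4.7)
The plan is to dispatch claims (1) and (2) as immediate consequences of results proved earlier in this section, then derive (3) from Proposition~\ref{prop-p-TI}, and finally handle (4) by an arithmetic reduction followed by an appeal to Lemma~\ref{lem-2-3}.

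For (1), I would combine the preceding lemma, which gives $\chi(\Cc_p(G \times H)) = \chi(\Cc_p(G))\chi(\Cc_p(H))$, with the elementary identity $|G \times H|_{p'} = |G|_{p'}|H|_{p'}$; dividing the first by the second yields $\chi_p(G \times H) = \chi_p(G)\chi_p(H)$. For (2), Lemma~\ref{lem-mod-normal-p-subgroup} gives $\Cc_p(G) \simeq \Cc_p(G/N)$, so the numerators agree, while $|G/N|_{p'} = |G|_{p'}$ since $|N|$ is a power of $p$; the two quotients are therefore identical.

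For (3), I would invoke Proposition~\ref{prop-p-TI}: when $G$ is a $p$-TI-group which is not itself a $p$-group it gives $\chi(\Cc_p(G)) = s|G|_{p'} - (s-1)|G|$, and dividing by $|G|_{p'}$ produces $\chi_p(G) = s - (s-1)|G|_p$. In the residual $p$-group case one has $s = 1$, and Theorem~\ref{thm-p-group} yields $\chi_p(G) = 1$, consistent with the same formula. The $p$-closed consequence follows at once because $s = 1$ in that case.

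The bulk of the work lies in (4), and this is where I expect the main obstacle. Because $G$ is $p$-TI but not $p$-closed, $s \geq 2$, and Sylow's congruence $s \equiv 1 \pmod{p}$ then upgrades this to $s \geq p+1 \geq 3$; moreover $|G|_p \geq p \geq 2$. Consequently $(s-1)|G|_p \geq 2(s-1) = s + (s-2) \geq s+1$, giving $\chi_p(G) = s - (s-1)|G|_p \leq -1$. The delicate part is the equality case: $\chi_p(G) = -1$ forces $(s-1)|G|_p = s+1$, hence $|G|_p = 1 + \tfrac{2}{s-1}$, which together with $s \geq 3$ and integrality pins down $s = 3$, $|G|_p = 2$, and therefore $p = 2$. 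At this point Lemma~\ref{lem-2-3} applies and produces the structural decomposition $G \cong S_3 \times A$ for some $2'$-group $A$. The converse is immediate: for such $G$, part (1) combined with $\chi_2(A) = 1$ (easy since $A$ is a $2'$-group, whence $\Cc_2(A)$ is the discrete poset of $|A|$ singletons) and part (3) applied to $S_3$ give $\chi_2(G) = \chi_2(S_3) \cdot 1 = -1$. Once the arithmetic is cornered to $s = 3$ and $|G|_p = 2$, Lemma~\ref{lem-2-3} carries out the structural heavy lifting needed for the equivalence.
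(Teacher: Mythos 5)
Your proposal is correct and follows essentially the same route as the paper: parts (1)--(3) are dispatched by the same preceding lemmas, and part (4) rests on the same two pillars, namely the formula $\chi_p(G)=s-(s-1)|G|_p$ from Proposition~\ref{prop-p-TI} together with Lemma~\ref{lem-2-3} for the structural identification. The only differences are cosmetic: you solve the equality case by reducing to the Diophantine condition $(s-1)|G|_p=s+1$ rather than chaining inequalities through $1+p-p^2\leq -1$ as the paper does, and you explicitly check the converse direction and the $p$-group edge case in (3), both of which the paper leaves implicit.
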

\begin{proof}
Part~$(1)$ follows from Lemma~\ref{lem-mobius-directprod} and Part~$(2)$ is a direct calculation. Part~$(3)$
follows from Proposition~\ref{prop-p-TI}. We shall show Part~$(4)$. Since $G$ is not $p$-closed, denote by $s$ the number of all Sylow $p$-subgroups of $G$, we have $s> 1$ and $|G|_p\geq p$. Sylow Theorem implies that $s\geq 1+p$.
Hence
$$\chi_p(G)=s-(s-1)|G|_p\leq s-p(s-1)\leq 1+p-p^2 \leq -1$$
We also see that if $\chi_p(G)=-1$, then $|G|_p=p=2$ and $s=3$.
It follows from Lemma~\ref{lem-2-3} that $G \cong S_3 \times A$, where $A$ is a $2'$-group.
\end{proof}

\begin{proof}[\emph{Proof of Theorem~\ref{thm-p-TI}}]
Note that if $G$ is $p$-closed, by Lemma~\ref{lem-p-local-char}(3), $\chi_p(G)=1$. Assume that
$G/\Oo_2(G)\cong A \times S_3 \times \cdots \times S_3~(m~\text{times})$, where $A$ is a $2$-closed group and $m$ is a positive even integer.
It follows from Lemma~\ref{lem-p-local-char}(3) that $\chi(S_3)=-1$ and $\chi_p(A)=1$.  By Lemma~\ref{lem-p-local-char}~(1) and (2),
$$\chi_2(G)=\chi_2(G/\Oo_2(G))=\chi_2(A)\chi_2(S_3)^m=(-1)^m=1,$$
as $m$ is a positive even integer. Hence the sufficiency is proven.

Now we assume that $G$ is a non-$p$-closed group and
$$\overline{G}=G/\Oo_p(G)=X_1 \times X_2 \times \cdots X_s,$$ where $X_i$ is a $p$-TI-group for each $i$. Since
$$1=\chi_p(G)=\chi_p(X_1)\cdots \chi_p(X_s)$$
and $\chi_p(X_i)$ is an integer, we have that 
$\chi_p(X_i)=\pm 1$ for each $i$. 

If $\chi_p(X_i)=1$ for each $i$, by Lemma~\ref{lem-p-local-char}(4), $X_i$ is $p$-closed. Hence $G/\Oo_p(G)$ is $p$-closed and so is $G$, as desired. Now, without loss of generality, we may assume that
$\chi_i(X_i)=-1$ for $1 \leq i \leq m$ and 
$\chi_i(X_i)=1$ for $m+1 \leq i \leq s$, where $m$ must be a positive even integer as the product of all $\chi_p(X_i)$ is equal to one.
It follows from Lemma~\ref{lem-p-local-char} that
$p=2$, moreover, $X_i$ is $2$-closed for $m+1 \leq i \leq s$; and 
 $X_i \cong S_3 \times A_i$ for some $2'$-subgroup $A_i$ for $1 \leq i \leq m$.

For $m+1 \leq i \leq s$, as $X_i \unlhd \overline{G}$, $\Oo_2(X_i) \leq \Oo_2(\overline{G})=1$. Hence $X_i$ is a $2'$-group for $m+1 \leq i \leq s$. Now we have
$$\overline{G}=X_1 \times \cdots \times X_m \times A,$$
where $A=A_1 \times \cdots \times A_m \times X_{m+1} \times \cdots \times X_s$ is a $2'$-group. This is the statement $(2)$ and so Theorem~\ref{thm-p-TI} is proved.
\end{proof}

\section{Proof of Theorem~\ref{thm-mod-p}}


The following well-known lemma is a generalization of Sylow's Third Theorem.
\begin{lemma}\emph{\cite[Satz~7.9]{Huppert1967}}\label{lem-Sylow-third}
Let $G$ be a group and $p$ be a prime. Suppose that
$$|P : P \cap Q| \geq p^d$$
for each pair $P,Q \in \Syl_p(G)$ with $P \neq Q$.
Then $|\Syl_p(G)|\equiv 1~(\text{mod}~p^d)$.
\end{lemma}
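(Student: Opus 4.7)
The plan is to fix a Sylow $p$-subgroup $P$ of $G$ and let $P$ act on $\Syl_p(G)$ by conjugation, then count via orbit-stabilizer.

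First I would identify the stabilizers. For any $Q \in \Syl_p(G)$, the $P$-stabilizer of $Q$ is $\N_P(Q) = P \cap \N_G(Q)$. I claim that when $Q \neq P$, we have $\N_P(Q) = P \cap Q$. Indeed, $\N_P(Q)$ is a $p$-subgroup normalizing the $p$-group $Q$, so $\N_P(Q) \cdot Q$ is again a $p$-subgroup of $G$; but $Q$ is already a Sylow $p$-subgroup, forcing $\N_P(Q) \leq Q$, and the reverse containment $P \cap Q \leq \N_P(Q)$ is trivial. This is the standard Sylow-type trick and is the only place where a genuine idea is used; I expect this to be the main step but it is short.

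Next I would apply orbit-stabilizer. The $P$-orbit of $Q \neq P$ has size
$$|P : \N_P(Q)| = |P : P \cap Q|,$$
which by hypothesis is at least $p^d$. Because $P$ is a $p$-group, every $P$-orbit on $\Syl_p(G)$ has size a power of $p$; being a $p$-power of size $\geq p^d$, every such orbit size is divisible by $p^d$. Meanwhile the orbit containing $P$ itself consists solely of $P$ (since $P$ normalizes itself), contributing $1$ to the total count.

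Finally I would sum the orbit sizes. Partitioning $\Syl_p(G)$ into $P$-orbits yields
$$|\Syl_p(G)| = 1 + \sum_{\mathcal{O}} |\mathcal{O}|,$$
where the sum runs over the $P$-orbits not containing $P$, each of cardinality a multiple of $p^d$. Reducing modulo $p^d$ gives $|\Syl_p(G)| \equiv 1 \pmod{p^d}$, as required. No additional tools beyond orbit-stabilizer and the standard normalizer-in-a-Sylow lemma are needed, so apart from the $\N_P(Q) = P \cap Q$ identification there is no real obstacle.
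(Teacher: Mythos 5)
Your proof is correct and complete. The paper does not prove this lemma at all---it simply cites it as \cite[Satz~7.9]{Huppert1967}---and your argument (letting a fixed Sylow $p$-subgroup $P$ act on $\Syl_p(G)$ by conjugation, identifying the stabilizer of $Q\neq P$ as $\N_P(Q)=P\cap Q$ via the standard Sylow trick, and summing orbit sizes) is precisely the classical textbook proof of this refinement of Sylow's Third Theorem, with all the key points (no fixed points other than $P$, orbit sizes being $p$-powers at least $p^d$) handled correctly.
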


Now we can prove Theorem~\ref{thm-mod-p}.
\begin{proof}[\textbf{\emph{Proof of Theorem~\ref{thm-mod-p}}}]
Set $p^d=\min\{|P:P \cap Q| \mid P,Q \in \Syl_p(G)~\text{with}~P \neq Q\}$. As $G$ is non-$p$-closed, $p^d> 1$.
Write $\mathcal{A}=\II_p(G)\setminus \Syl_p(G)$. For each $X \in  \mathcal{A}$, $X$ must be the intersection of at least two Sylow $p$-subgroups of $G$. Hence $p^d$ divides $|G|_p/|X|$.  By Corollary~\ref{cor-char-intersection}, we have
$$
\begin{aligned}
\chi_p(G)=\frac{\chi(\Cc_p(G))}{|G|_{p'}}&=-\sum_{H \in \II_p(G)} \mu(H,G)|G|_p/|H|\\
&=-\sum_{H \in \Syl_p(G)} \mu(H,G)|G|_p/|H|-\sum_{H \in \mathcal{A}} \mu(H,G)|G|_p/|H|\\
&\equiv-\sum_{H \in \Syl_p(G)} \mu(H,G)|G|_p/|H|~(\text{mod}~p^d)\\
&\equiv-\sum_{H \in \Syl_p(G)} \mu(H,G)~(\text{mod}~p^d)\\
&\equiv|\Syl_p(G)|~(\text{mod}~p^d),
\end{aligned}
$$
where $\mu$ is the M\"{o}bius function of the poset $\II_p(G) \cup \{G\}$, and $\mu(H,G)=-1$ for each $H \in  \Syl_p(G)$. It follows from Lemma~\ref{lem-Sylow-third} that
$$\chi(\Cc_p(G))/|G|_{p'}\equiv|\Syl_p(G)|~\equiv 1~(\text{mod}~p^d),$$
as desired.
\end{proof}

\textbf{Acknowledgement.} The second author is supported by Natural Science Foundation of Shanghai (24ZR1422800) and National Natural Science Foundation of China (12471018). The third author is supported by National Natural Science Foundation of China (12171302).

\bibliographystyle{plain}
  \bibliography{bibM}
\end{document}